\documentclass[12pt]{article}
\usepackage{amsmath}
\usepackage{amssymb}
\usepackage{latexsym}
\usepackage{amsthm}
\usepackage{mathrsfs}
\usepackage{enumitem}
\usepackage[colorlinks,
            linkcolor=blue,
            anchorcolor=blue,
            citecolor=blue
            ]{hyperref}

\setlength{\topmargin}{0.25cm} \setlength{\oddsidemargin}{0.25cm}
 \setlength{\textwidth}{16cm}
\setlength{\textheight}{22.1cm}

\newtheorem{thm}{Theorem}[section]
\newtheorem{lem}[thm]{Lemma}

\newtheorem{cor}[thm]{Corollary}

\def\ZZ{{\mathbb Z}}

\newcommand{\Z}{\mathbb{Z}}

\newcommand{\SW}{\mathcal{SW}}

\newcommand{\asc}{{\rm asc}}

\newcommand{\des}{{\rm des}}

\newcommand{\esd}{{\rm esd}}

\newcommand{\exc}{{\rm exc}}

\newcommand{\fix}{{\rm fix}}
\newcommand{\col}{{\rm col}}
\newcommand{\bad}{{\rm bad}}
\newcommand{\fS}{{\mathfrak S}}


\def\newop#1{\expandafter\def\csname #1\endcsname{\mathop{\rm
			#1}\nolimits}}
		
\newop{rev}

\linespread{1.05}

\begin{document}

\begin{center}
{\large \bf  Real-rootedness of variations of Eulerian polynomials}
\end{center}

\begin{center}
James Haglund\textsuperscript{a}  and   Philip B. Zhang\textsuperscript{b} \\[6pt]

$^{a}$Department of Mathematics\\
University of Pennsylvania, Philadelphia, PA 19104-6395, USA\\[6pt]

$^{b}$College of Mathematical Science\\
 Tianjin Normal University, Tianjin  300387, China\\[6pt]

Email:   
$^{a}${\tt  jhaglund@math.upenn.edu},   $^{b}${\tt zhang@tjnu.edu.cn}

\end{center}

\noindent\textbf{Abstract.} 
The binomial Eulerian polynomials, introduced by Postnikov, Reiner, and Williams, are $\gamma$-positive polynomials and can be interpreted as $h$-polynomials of certain flag simplicial polytopes.
Recently,  Athanasiadis studied analogs of these
polynomials for colored permutations.
In this paper, we generalize them to  $\mathbf{s}$-inversion sequences and prove that these new polynomials have only real roots by the method of interlacing polynomials.
Three applications of this result are presented. The first one is to prove the real-rootedness of binomial Eulerian polynomials, which confirms a conjecture of Ma, Ma, and Yeh.
The second one is to prove that the symmetric decomposition of binomial Eulerian polynomials for colored permutations is real-rooted.
Thirdly, our polynomials for certain $\mathbf{s}$-inversion sequences are shown to admit a similar geometric interpretation related to edgewise subdivisions of simplexes.

\noindent  \textbf{AMS Classification 2010:}  05A15, 26C10, 52B05.

\noindent \textbf{Keywords:}  real-rootedness, interlacing,  binomial Eulerian polynomials, colored permutations, $h$-polynomials, edgewise subdivisions.

\section{Introduction}
The original motivation of this paper is to study the real-rootedness of binomial Eulerian polynomials.
For any positive integer $n$, let   $[n]$  be the set $\{1,2,\ldots,n\}$. Denote by  $\mathfrak{S}_n$  the set of  permutations of $[n]$.  
Given a permutation $\pi = \pi_1\pi_2\ldots\pi_n\in\mathfrak{S}_n$, the descent number  of $\pi$ is the number of   $i\in[n-1]$  satisfying  $\pi_i>\pi_{i+1}$ and the excedance number of $\pi$ is the number of $i\in [n]$ such that $\pi_i>i$.
Recall that $\pi$ is a derangement if $\pi_i\neq i$ for all $i\in [n]$, and denote  by $\mathfrak{D}_n$  the set of  derangements in $\mathfrak{S}_n$.  
The polynomials
\[
A_n(z) :=\sum_{\pi\in\mathfrak{S}_n}z^{\des (\pi)} 
\qquad
\mbox{and}
\qquad
d_n(z):=\sum_{\pi\in\mathfrak{D}_n}z^{\exc (\pi)}
\]
are known as the \emph{Eulerian polynomial} and \emph{derangement polynomial}, respectively.  
A common interesting property of these two polynomials is the  $\gamma$-positivity.
Recall that a polynomial $h(z)$ with nonnegative integer coefficients is said to be $\gamma$-positive, if it admits an expansion of the form 
\[
h(z) = \sum_{i=0}^{\lfloor n/2 \rfloor} 
\gamma_{i} \, z^i (1+z)^{n-2i} 
\]
where $\gamma_{i}$ are  nonnegative integers.
Gamma-positivity directly implies palindromicity and unimodality and  appears widely in combinatorial and geometric contexts, see~\cite{Athanasiadis2017survey} for a survey. 
The following variation of Eulerian polynomials
\begin{equation*}
\widetilde{A}_n(z) \ := 1+ z \sum_{m=1}^n 
{n \choose m}  A_m(z),
\end{equation*}
first studied by Postnikov, Reiner, and Williams~\cite[Section~10.4]{Postnikov2008Faces}, are also $\gamma$-positive and have attracted a lot of interest recently \cite{Shareshian2017Gamma, Ma2017Recurrence, Athanasiadis2018Binomial, Lin2018Sign}.  
Shareshian and Wachs~\cite{Shareshian2017Gamma} called them \emph{binomial Eulerian polynomials} and further studied a  symmetric function generalization of them, which are shown to be  equivariant $\gamma$-positive.  
Another common property of $A_n(z)$ and $d_n(z)$ is that they both have  only real roots, proved by~Frobenius~\cite{Frobenius1910Uber}  and Zhang~\cite{Zhang1995$q$}, respectively.
It is natural to ask whether $\widetilde{A}_n(z)$ is real-rooted as well, which was conjectured by Ma, Ma, and Yeh~\cite{Ma2017Recurrence} based on empirical evidence. 

Eulerian polynomials and derangement polynomials can be generalized to $\mathbf{s}$-inversion sequences.
Given a sequence of positive integers $\mathbf{s} = (s_1,\ldots,s_n)$, define the set of { $\mathbf{s}$-inversion sequences} of length $n$ by
\[
\mathcal{I}_n^{\mathbf{s}} :=
\{
(e_1,\ldots,e_n) \in\Z^n
\,:\,
0\leq e_i<s_i, \, 0\leq i\leq n
\}
\]
with the assumption that $e_0 = e_{n+1} =0$ and $s_0 = s_{n+1} = 1$. 
Following~\cite{Gustafsson2018Box, Gustafsson2018Derangements}, an index $i \in[0,n]$ of  an inversion sequence $\mathbf{e} = (e_1,\ldots, e_n)\in \mathcal{I}_n^\mathbf{s}$ is said to be an  ascent if $\frac{e_i}{s_i}<\frac{e_{i+1}}{s_{i+1}}$, 
a  collision if $\frac{e_i}{s_i}=\frac{e_{i+1}}{s_{i+1}}$, and 
a descent if $\frac{e_i}{s_i}>\frac{e_{i+1}}{s_{i+1}}$,
and denote by $\asc (\mathbf{e})$,  $\col  (\mathbf{e})$, and $\des (\mathbf{e})$  the number of ascents, collisions, and descents in $\mathbf{e}$, respectively.
Let $\mathcal{D}_n^\mathbf{s}$ be the subset of $\mathcal{I}_n^{\mathbf{s}}$ consisting of $\mathbf{e}$ with $\col (\mathbf{e}) =0$.
The {$\mathbf{s}$-Eulerian polynomial}  and {$\mathbf{s}$-derangement polynomial} are defined  as 
\begin{align*}
	E_n^{\mathbf{s}}(z) :=\sum_{\mathbf{e} \in \mathcal{I}_n^{\mathbf{s}}} z^{\asc(\mathbf{e})}
\quad \mbox{ and } \quad
	d_n^\mathbf{s}(z) :=\sum_{\mathbf{e} \in \mathcal{D}_n^\mathbf{s}} z^{\asc(\mathbf{e})},
\end{align*}
respectively.
The real-rootedness of $E_n^{\mathbf{s}}(z)$ and $d_n^\mathbf{s}(z)$ was proved by Savage and Vistonai~\cite{Savage2015$s$}, and Gustafsson and Solus~\cite{Gustafsson2018Derangements}, respectively. Both proofs are via the method of interlacing polynomials, which has also been widely used to prove the real-rootedness of several polynomials arising in  combinatorics (\cite{Jochemko2018real, Yang2014Mutual, Zhang2015real, Leander2016Compatible, Solus2019Simplices}). 

In this paper, we generalize the notion of binomial Eulerian polynomials to $\mathbf{s}$-inversion sequences as follows:
\begin{align}\label{eq:wE}
	\widetilde{E}_n^\mathbf{s}(z) :=\sum_{\mathbf{e} \in \mathcal{I}_n^\mathbf{s}} (1+z)^{\col (\mathbf{e})} z^{\asc(\mathbf{e})}.
\end{align}
The main objective of this paper is  to prove the real-rootedness of $\widetilde{E}_n^\mathbf{s}(z)$. 
To this end, let us first recall some notion about interlacing polynomials.
Given two real-rooted polynomials $f(z)$ and $g(z)$ with positive leading coefficients, let $\{u_i\}$ and $\{v_j\}$ be the set of zeros of $f(z)$ and $g(z)$, respectively.
Recall that \emph{$g(z)$ interlaces $f(z)$}, denoted $g(z)\ll f(z)$, if either
$\deg f(z)=\deg g(z)=d$ and
\begin{align*}
v_d\le u_d \le v_{d-1}\le\cdots\le v_2\le u_2\le v_1\le u_1,
\end{align*}
or $\deg f(z)=\deg g(z)+1=d$ and
\begin{align*}
u_{d}\le v_{d-1}\le\cdots\le v_{2}\le u_{2}\le v_{1}\le u_{1}.
\end{align*}
For convention, we let $0 \ll f$ and $f \ll 0$ for any real-rooted polynomial $f$.
Following Br{\"a}nd{\'e}n~\cite{Braenden2015Unimodality}, a sequence of real polynomials $(f_{1}(z),\dots,f_{m}(z))$
with positive leading coefficients is said to be \emph{interlacing} if $f_{i}(z) \ll f_{j}(z)$ for all $1\le i<j\le m$.
In this paper,  we consider a refinement of $\widetilde{E}_n^\mathbf{s}(z)$, similar to those of  $E_n^{\mathbf{s}}(z)$ and $d_n^{\mathbf{s}}(z)$.
For $1\le m \le n$ and $0\leq k<s_m$, define 
the set  by
\[
\mathcal{J}_m^{\mathbf{s}} :=
\{
 (e_1,\ldots,e_m) \in\Z^m
\,:\,
0\leq e_i<s_i, \, 0\leq i\leq m
\}
\]
with the assumption that $e_0 =0$ and $s_0 = 1$. 
Let $\chi(S)$ be $1$ if $S$ is a true statement and $0$ otherwise.
Now we define the refined polynomials as
\[
p_{m,k}^\mathbf{s}(z) := \sum_{\mathbf{e} = (e_1,\ldots,e_m) \in \mathcal{J}_m^\mathbf{s}}\chi(e_m = k)  (1+z)^{\col' (\mathbf{e})}  z^{\asc(\mathbf{e})},
\]
where $\col' (\mathbf{e}) := |\{ i\in[0,m-1]: \frac{e_i}{s_i}=\frac{e_{i+1}}{s_{i+1}}  \}|$. Note that when $\mathbf{e} = (e_1,\ldots,e_n)$, $\col (\mathbf{e}) = \col' (\mathbf{e}) + \chi(e_n=0)$.
It is clear that
\begin{align*}
		\widetilde{E}_n^\mathbf{s}(z)   = \ (1+z)p_{n,0}^\mathbf{s}(z) +   \sum_{k=1}^{s_n-1}p_{n,k}^\mathbf{s}(z). 
\end{align*}
In this paper, we shall prove the following theorem by the method of interlacing polynomials.
\begin{thm}\label{thm:s}
	Let $\mathbf{s} = (s_1,\ldots, s_n)$ be a sequence of positive integers.  
	Then for any $1\le m \le n$ the  sequence $\left( p_{m,k}^\mathbf{s}(z) \right)_{k=0}^{s_m-1}$ is interlacing and therefore the polynomial $\widetilde{E}_n^\mathbf{s}(z)$ has only real roots.
\end{thm}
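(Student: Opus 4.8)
The plan is to prove, by induction on $m$, that $\bigl(p_{m,k}^\mathbf{s}(z)\bigr)_{k=0}^{s_m-1}$ is an interlacing sequence of polynomials with nonnegative coefficients; the real-rootedness of $\widetilde{E}_n^\mathbf{s}(z)$ will then follow from one further application of the same inductive step. The two ingredients are: (i) a linear recursion writing each $p_{m,k}^\mathbf{s}$ as a combination of $p_{m-1,0}^\mathbf{s},\dots,p_{m-1,s_{m-1}-1}^\mathbf{s}$ with coefficients in $\{1,z,1+z\}$, and (ii) a Br\"and\'en-type lemma guaranteeing that combinations of this shape send interlacing sequences to interlacing sequences.

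For ingredient (i): deleting the last letter $e_m=k$ of $\mathbf{e}\in\mathcal{J}_m^\mathbf{s}$ leaves a prefix in $\mathcal{J}_{m-1}^\mathbf{s}$ with last letter $e_{m-1}=j$, and the index $m-1$ of $\mathbf{e}$ is an ascent, a collision, or a descent according as $js_m<ks_{m-1}$, $js_m=ks_{m-1}$, or $js_m>ks_{m-1}$. Summing over prefixes with $e_{m-1}=j$ and using $1+z=z+1$ yields, with $\beta_k:=\lfloor ks_{m-1}/s_m\rfloor$ and $\gamma_k:=\lceil ks_{m-1}/s_m\rceil$,
\[
p_{m,k}^\mathbf{s}(z)\;=\;z\sum_{j=0}^{\beta_k}p_{m-1,j}^\mathbf{s}(z)\;+\;\sum_{j=\gamma_k}^{s_{m-1}-1}p_{m-1,j}^\mathbf{s}(z),
\]
where $\gamma_k\in\{\beta_k,\beta_k+1\}$ (with $\gamma_k=\beta_k$ exactly when $s_m\mid ks_{m-1}$, the overlapping index $\beta_k=\gamma_k$ being precisely the collision, which thereby acquires the weight $1+z$), and both $\beta_k,\gamma_k$ are weakly increasing in $k$. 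The base case $m=1$ reads $\bigl(p_{1,k}^\mathbf{s}\bigr)_{k=0}^{s_1-1}=(1+z,z,z,\dots,z)$, which is interlacing since $1+z\ll z$ and $z\ll z$.

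For ingredient (ii): I would invoke the lemma that if $(f_0,\dots,f_t)$ is interlacing with nonnegative coefficients and $a_1\le\cdots\le a_N$, $b_1\le\cdots\le b_N$ are integers with $a_k\le b_k\le a_k+1$, then $g_k:=z\sum_{j=0}^{a_k}f_j+\sum_{j=b_k}^{t}f_j$ $(1\le k\le N)$ is interlacing (and in particular each $g_k$ is real-rooted). For $b_k=a_k+1$ this is the lemma Savage and Visontai used for $E_n^\mathbf{s}(z)$; the extension permitting a one-index overlap $b_k=a_k$ is standard in the interlacing calculus and is used in the same spirit for derangement-type polynomials, so I would either cite \cite{Braenden2015Unimodality, Savage2015$s$, Gustafsson2018Derangements} or supply a short proof following their methods. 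Applying this lemma to the displayed recursion with $(f_j)=(p_{m-1,j}^\mathbf{s})_{j=0}^{s_{m-1}-1}$, $a_k=\beta_k$, $b_k=\gamma_k$ carries the inductive hypothesis from level $m-1$ to level $m$. Finally, since $p_{m,k}^{\mathbf{s}}$ depends only on $s_1,\dots,s_m$, running the induction one further step for the sequence $(s_1,\dots,s_n,1)$ gives, by the recursion at $m=n+1$ with $k=0$,
\[
p_{n+1,0}^{(s_1,\dots,s_n,1)}(z)=(1+z)p_{n,0}^\mathbf{s}(z)+\sum_{k=1}^{s_n-1}p_{n,k}^\mathbf{s}(z)=\widetilde{E}_n^\mathbf{s}(z),
\]
a length-one interlacing sequence, hence a real-rooted polynomial.

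I expect the routine parts to be the delete-the-last-letter bijection and the monotonicity of $\beta_k,\gamma_k$. The main obstacle is ingredient (ii): stating the interlacing-preservation lemma precisely enough to cover the $(1+z)$-weighted collision (the case $\beta_k=\gamma_k$, where the ``$z$-block'' $\{0,\dots,\beta_k\}$ and the ``$1$-block'' $\{\gamma_k,\dots,s_{m-1}-1\}$ share one index) and then verifying it through the interlacing calculus -- keeping track of exactly which root inequalities persist as the cut-off indices $a_k,b_k$ increase -- including the degenerate situations $s_{m-1}=1$ and empty blocks.
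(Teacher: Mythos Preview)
Your proposal is correct and follows essentially the same route as the paper: the delete-last-letter recursion (your ingredient (i)) is exactly the paper's Lemma preceding the proof, and your ingredient (ii) is precisely the paper's Theorem~\ref{thm: interlacing recursion}. One caveat: you treat the interlacing-preservation lemma allowing the one-index overlap ($b_k=a_k$, giving the $1+z$ weight) as something that can be cited from \cite{Braenden2015Unimodality,Savage2015$s$,Gustafsson2018Derangements}, but in fact this extension is the main technical contribution of Section~\ref{sect-interlacing}; the paper proves it from scratch via Br\"and\'en's $2\times 2$-submatrix criterion (Lemma~\ref{pb}), checking each of the nine possible $2\times 2$ blocks containing a $1+z$, and notes that two such blocks do \emph{not} preserve interlacing --- these are ruled out exactly by the monotonicity of both $(\beta_k)$ and $(\gamma_k)$, which in your formulation is what excludes them. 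Your final step, appending $s_{n+1}=1$ and reading $\widetilde{E}_n^\mathbf{s}(z)=p_{n+1,0}^{(s_1,\dots,s_n,1)}(z)$ off the recursion, is a clean variant of the paper's endgame (which instead argues directly that $p_{n,k}^\mathbf{s}\ll z\,p_{n,0}^\mathbf{s}$ and sums).
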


The rest of this paper is organized as follows. Section \ref{sect-interlacing} is dedicated to the proof of Theorem~\ref{thm:s}. To prove it, we investigate a new kind of interlacing-preserving matrices with entries $1$, $z$, and $1+z$.
Then three applications are presented in Section~\ref{sect-applications}.
The first one  is the real-rootedness of $\widetilde{A}_n(z)$, which confirms Ma, Ma, and Yeh's conjecture.
Another application is the real-rootedness of  $\widetilde{A}^+_{n,r}(z)$ and  $\widetilde{A}^-_{n,r}(z)$,  the sum of which form the binomial Eulerian polynomials for colored permutations $\widetilde{A}_{n,r}(z)$,  defined by Athanasiadis~\cite{Athanasiadis2018Binomial} recently.
The polynomials $\widetilde{A}_n(z)$ and $\widetilde{A}^+_{n,r}(z)$ can be  interpreted as  $h$-polynomials of  boundary complexes of certain simplicial polytopes, see \cite{Athanasiadis2018Binomial, Postnikov2008Faces}.
In our third application, the  polynomials $\widetilde{E}_n^\mathbf{s}(z)$ for certain $\mathbf{s}$-inversion sequences are shown to be such $h$-polynomials, which are related to  the edgewise subdivisions of simplexes.
An alternative approach to the real-rootedness of these polynomials are also presented.

\section{Interlacing}\label{sect-interlacing}

In this section, we shall  prove Theorem~\ref{thm:s}.
In order to prove the interlacing property of a family of polynomials, it is desirable to prove that  the polynomials of interest satisfy a recursion that produces a new interlacing sequence from an old one.

\begin{lem}
Let $\mathbf{s} = (s_1,s_2,\ldots, s_n)$. 
For $1\le m \le n$ and $0\leq k< s_m$, let $t_{m,k}:=\left\lceil\frac{ks_{m-1}}{s_m}\right\rceil$. 
The  sequence $\left( p_{m,k}^\mathbf{s}(z) \right)_{k=0}^{s_m-1}$   satisfies the following recurrence relation
\begin{align}\label{eqn: recursion}
p_{m,k}^\mathbf{s}(z)  = 	\begin{cases}
z\sum\limits_{i=0}^{t_{m,k}-1}p_{m-1,i}^\mathbf{s}(z)+(1+z) p_{m-1,t_{m,k}}^\mathbf{s}(z) + \sum\limits_{i=t_{m,k}+1}^{s_n-1}p_{m-1,i}^\mathbf{s}(z), & \text{if  }s_m \big| \, ks_{m-1}, \\[10pt]
z\sum\limits_{i=0}^{t_{m,k}-1}p_{m-1,i}^\mathbf{s}(z)+ \sum\limits_{i=t_{m,k}}^{s_m-1}p_{m-1,i}^\mathbf{s}(z), & \text{if  }s_m \not\big| \, ks_{m-1}.
\end{cases}
\end{align}
with the initial conditions 
\[
p_{1,0}^\mathbf{s}(z) = 1+z
\qquad
\mbox{and} 
\qquad
p_{1,k}^\mathbf{s}(z) = z
\qquad
\mbox{ for  \, $1\leq k<s_1$}.
\]
\end{lem}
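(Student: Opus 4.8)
The plan is to prove the recurrence by peeling off the last letter of each $\mathbf{s}$-inversion sequence, decomposing $\mathcal{J}_m^{\mathbf{s}}$ according to the value of the second-to-last coordinate; the initial conditions then fall out by inspection. Indeed, for $m=1$ an element of $\mathcal{J}_1^{\mathbf{s}}$ is a single letter $e_1=k$ with $0\le k<s_1$, and with the conventions $e_0=0$, $s_0=1$ the only index available is $i=0$; since $\tfrac{e_0}{s_0}=0$, this index is a collision exactly when $k=0$ and an ascent exactly when $k>0$, so $\col'(\mathbf{e})=\chi(k=0)$ and $\asc(\mathbf{e})=\chi(k>0)$, giving $p_{1,0}^{\mathbf{s}}(z)=1+z$ and $p_{1,k}^{\mathbf{s}}(z)=z$ for $1\le k<s_1$.

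For the recurrence, fix $m\ge 2$ and $0\le k<s_m$. Every $\mathbf{e}=(e_1,\dots,e_m)\in\mathcal{J}_m^{\mathbf{s}}$ with $e_m=k$ is the concatenation of some $\mathbf{e}'=(e_1,\dots,e_{m-1})\in\mathcal{J}_{m-1}^{\mathbf{s}}$ with the letter $k$, and the only index counted by $\col'$ or $\asc$ of $\mathbf{e}$ but not of $\mathbf{e}'$ is $i=m-1$. Hence $\col'(\mathbf{e})=\col'(\mathbf{e}')+\chi(\tfrac{e_{m-1}}{s_{m-1}}=\tfrac{k}{s_m})$ and $\asc(\mathbf{e})=\asc(\mathbf{e}')+\chi(\tfrac{e_{m-1}}{s_{m-1}}<\tfrac{k}{s_m})$, so the weight $(1+z)^{\col'(\mathbf{e})}z^{\asc(\mathbf{e})}$ factors as $(1+z)^{\col'(\mathbf{e}')}z^{\asc(\mathbf{e}')}$ times a transition factor $w(i)$ depending only on $i=e_{m-1}$, namely $w(i)=1+z$, $w(i)=z$, or $w(i)=1$ according as $is_m$ equals, is less than, or is greater than $ks_{m-1}$. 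Grouping the monomials of $p_{m,k}^{\mathbf{s}}(z)$ by $i\in\{0,\dots,s_{m-1}-1\}$ then yields $p_{m,k}^{\mathbf{s}}(z)=\sum_i w(i)\,p_{m-1,i}^{\mathbf{s}}(z)$; this is the step where the novelty relative to the known recursions for $E_n^{\mathbf{s}}$ and $d_n^{\mathbf{s}}$ shows up, since the collision case contributes the factor $1+z$.

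It remains to rewrite the three ranges for $i$ in terms of $t_{m,k}=\lceil ks_{m-1}/s_m\rceil$, using that $is_m<ks_{m-1}$ is equivalent to $i<ks_{m-1}/s_m$. If $s_m\mid ks_{m-1}$, then $ks_{m-1}/s_m$ is the integer $t_{m,k}$, so $w(i)=z$ for $i<t_{m,k}$, $w(i)=1+z$ at $i=t_{m,k}$, and $w(i)=1$ for $i>t_{m,k}$, which is the first branch. If $s_m\nmid ks_{m-1}$, then $ks_{m-1}/s_m$ is not an integer, no index gives equality, and $t_{m,k}$ is the least integer above $ks_{m-1}/s_m$, so $w(i)=z$ for $i<t_{m,k}$ and $w(i)=1$ for $i\ge t_{m,k}$, which is the second branch. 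The whole argument is a routine bijective computation and I do not expect a genuine obstacle; the only point demanding care is keeping the ceiling and the strict-versus-weak inequalities straight in the two divisibility cases, together with observing that the top summation index throughout is the largest admissible value $s_{m-1}-1$ of $e_{m-1}$.
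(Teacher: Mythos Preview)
Your proposal is correct and follows essentially the same approach as the paper: peel off the last coordinate, classify the index $m-1$ as ascent, collision, or descent according to the comparison of $e_{m-1}/s_{m-1}$ with $k/s_m$, and translate these inequalities into the threshold $t_{m,k}$. You have in fact written out the argument in more detail than the paper does, and your closing remark that the upper summation limit should be $s_{m-1}-1$ correctly flags a typo in the stated recurrence.
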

\begin{proof}
	The initial conditions are easy to check. 
The recursion~\eqref{eqn: recursion} holds, since if $\mathbf{e} = (e_1,\ldots,e_m)\in{\mathcal{J}_m^\mathbf{s}}$ with $e_m = k$ then
\begin{itemize}
	\item $m-1$ is a ascent in $\mathbf{e}$ if and only if $\frac{e_{m-1}}{s_{m-1}}<\frac{k}{s_m}$,  equivalently, $e_{m-1}<t_{m,k}$.
	\item $m-1$ is a collision  in $\mathbf{e}$ if and only if $\frac{e_{m-1}}{s_{m-1}}=\frac{k}{s_m}$, equivalently, $s_m \big| \, ks_{m-1}$.  
\end{itemize}
This completes the proof.
\end{proof}

As usual, it is more convenient to express such recursions via matrix multiplications:
\begin{align}\label{eq-ma-f-g}
  (g_1, \ldots, g_p)^T = G\cdot (f_1, \ldots, f_q)^T,
\end{align}
where $G= (G_{ij}(z))$ is  a  $p \times q$ matrix of polynomials.
A characterization of such matrices was due to Br{\"a}nd{\'e}n~\cite{Braenden2015Unimodality}.

\begin{lem} [{\cite[Theorem 8.5]{Braenden2015Unimodality}}]\label{pb}
	 Let  $\mathcal{F}_q^+$ be the set of  interlacing
	 sequences $(f_i(z))_{i=1}^q$ such that all the coefficients of $f_i(z)$ are  nonnegative  for all $1 \leq i \leq q$. Suppose that  $G= (G_{ij}(z))$  is a $p \times q$ matrix of polynomials. Then $G : \mathcal{F}_q^+ \rightarrow \mathcal{F}_p^+$  if and only if
	\begin{enumerate}
		\item[(1)] $G_{ij}(z)$ has nonnegative coefficients for all $1\le i \le p$ and $1\le j \le q$, and
		\item[(2)] for all $\lambda, \mu >0$, $1\leq i< j \leq q$ and $1\leq k< \ell \leq p$,
		\begin{equation}\label{pbxvw}
		(\lambda z + \mu) G_{kj}(z) + G_{\ell j}(z) \ll (\lambda z + \mu) G_{ki}(z) + G_{\ell i}(z).
		\end{equation}
	\end{enumerate}
\end{lem}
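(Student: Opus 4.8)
The plan is to prove Lemma~\ref{pb} by establishing both implications of the stated equivalence, following Br\"and\'en's framework for interlacing-preserving linear maps. For the necessity direction, I would start from the assumption that $G$ maps $\mathcal{F}_q^+$ into $\mathcal{F}_p^+$ and extract conditions (1) and (2) by feeding $G$ carefully chosen input sequences. To get (1), note that for each fixed $j$ the constant sequence which is $0$ in all coordinates except a single $1$ in position $j$ lies in $\mathcal{F}_q^+$ (using the convention $0 \ll f$ and $f \ll 0$); its image under $G$ is the $j$-th column of $G$, which must lie in $\mathcal{F}_p^+$, forcing every $G_{ij}(z)$ to have nonnegative coefficients. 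To get (2), the idea is to plug in the two-parameter family $f_i(z) = \lambda z + \mu$, $f_j(z) = 1$ with all other coordinates $0$, for $i < j$: this is an interlacing sequence with nonnegative coefficients (one checks $1 \ll \lambda z + \mu$ for $\lambda,\mu > 0$), so the image sequence is again interlacing; reading off coordinates $k < \ell$ gives exactly the required relation $(\lambda z + \mu)G_{kj}(z) + G_{\ell j}(z) \ll (\lambda z + \mu)G_{ki}(z) + G_{\ell i}(z)$.

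For the sufficiency direction, suppose (1) and (2) hold and let $(f_1(z),\dots,f_q(z)) \in \mathcal{F}_q^+$; I must show the image $(g_1(z),\dots,g_p(z))$ with $g_k = \sum_j G_{kj} f_j$ is again an interlacing sequence with nonnegative coefficients. Nonnegativity of coefficients is immediate from (1) and the nonnegativity of the $f_j$. For the interlacing, the standard approach is to reduce to the case $q = 2$ (and then to general $q$ by a telescoping/induction argument on the number of columns, peeling off one interlacing pair at a time and using that sums of polynomials with a common interlacer stay controlled). In the two-column case the key structural fact is a characterization of when $(\alpha(z) f_1 + \beta(z) f_2,\ \gamma(z) f_1 + \delta(z) f_2)$ is interlacing for all interlacing pairs $f_1 \ll f_2$: this holds precisely when the "cross" inequality of the form $(\lambda z + \mu)\alpha(z) + \gamma(z) \ll (\lambda z + \mu)\beta(z) + \delta(z)$ holds for all $\lambda,\mu > 0$, which is exactly the shape of condition~\eqref{pbxvw}. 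I would invoke the basic closure properties of the relation $\ll$ (it is preserved under nonnegative linear combinations with a common interlacer, and multiplication by $z$ or by a positive constant behaves predictably on degrees), together with the classical fact that $f \ll g$ and $f \ll h$ imply $f \ll (g + h)$ when leading coefficients are positive, to assemble the general statement from these pieces.

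Since this lemma is quoted verbatim as \cite[Theorem 8.5]{Braenden2015Unimodality}, the cleanest option is to give the two-paragraph reduction above in outline form and then cite Br\"and\'en for the technical core, rather than reproving the full theory of linear operators preserving interlacing. The main obstacle, if one insists on a self-contained argument, is the sufficiency direction: translating the pairwise matrix inequalities~\eqref{pbxvw} into the conclusion that the \emph{entire} image sequence is simultaneously interlacing requires the delicate lemma that a common interlacer is preserved under addition and that the relation $\ll$ is ``transitive enough'' along chains. Handling the degree bookkeeping — whether $\deg g_k = \deg g_{k+1}$ or $\deg g_k = \deg g_{k+1} + 1$, and ensuring no degree collapses occur that would break the interlacing pattern — is the genuinely technical point, and it is exactly where Br\"and\'en's analysis of the two parameters $\lambda, \mu$ (covering both the equal-degree and the drop-by-one cases uniformly) does the real work. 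For the purposes of this paper I would state the lemma with its citation and move directly to applying it to the recursion~\eqref{eqn: recursion}.
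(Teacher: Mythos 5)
The paper does not prove this lemma at all: it is imported verbatim as Br\"and\'en's Theorem~8.5 \cite{Braenden2015Unimodality}, so your closing recommendation --- state it with the citation and move on --- is exactly what the authors do. If, however, your sketch is read as an actual proof, the necessity direction has a concrete gap. The test sequence you propose, $f_i(z)=\lambda z+\mu$ and $f_j(z)=1$ with $i<j$, is not an interlacing sequence: one needs $f_i\ll f_j$, i.e.\ $\lambda z+\mu\ll 1$, which fails (under the definition of $\ll$ the degree cannot drop by one in that direction), and your parenthetical check ``$1\ll\lambda z+\mu$'' verifies the opposite order. More importantly, even after swapping the two entries, inputs supported on columns $i<j$ only yield the conditions $G_{ki}+(\lambda z+\mu)G_{kj}\ \ll\ G_{\ell i}+(\lambda z+\mu)G_{\ell j}$, in which the two \emph{columns} are mixed within each row; this is not condition~\eqref{pbxvw}, which mixes the two \emph{rows} $k,\ell$ within a single column and runs the interlacing from column $j$ to column $i$. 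No two-entry input produces \eqref{pbxvw} directly, so the stated condition cannot be extracted this way.

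What actually powers Br\"and\'en's argument, in both directions, is a duality your outline does not use: for polynomials with nonnegative coefficients, $g_k\ll g_\ell$ holds if and only if $(\lambda z+\mu)g_k+g_\ell$ is real-rooted for all $\lambda,\mu>0$, combined with the characterization of those coefficient sequences $(h_1,\dots,h_q)$ for which $\sum_j h_j f_j$ is real-rooted for every $(f_j)_{j=1}^q\in\mathcal{F}_q^+$, namely exactly those with $(h_q,\dots,h_1)$ interlacing. Expanding $(\lambda z+\mu)g_k+g_\ell=\sum_j\bigl[(\lambda z+\mu)G_{kj}+G_{\ell j}\bigr]f_j$ then gives necessity and sufficiency simultaneously and explains the reversed column order in \eqref{pbxvw}. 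Your sufficiency paragraph instead reduces to a two-column case whose ``key structural fact'' is essentially the $2\times 2$ instance of the theorem itself, so as written that step is circular, and the degree bookkeeping you single out is not where the real work lies. For the purposes of this paper, quoting the lemma with the citation, as the authors do, is the right call.
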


We also need the following lemma.
\begin{lem}[{\cite[Lemma 2.6]{Borcea2010Multivariate}}]\label{lem:convex}
	Let $f$, $g$, and $h$ be real-rooted polynomials  with nonnegative coefficients. 
	\begin{itemize}
		\item  	If $f \ll g$ and $f \ll h$, then $f \ll g+h$.
		\item 	If $f \ll g$ and $h \ll g$, then $f+h \ll g$.
	\end{itemize}
\end{lem}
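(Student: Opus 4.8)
\textbf{Proof plan for Theorem~\ref{thm:s}.}
The plan is to argue by induction on $m$, using the recurrence~\eqref{eqn: recursion} packaged as a matrix transformation and verified against Br{\"a}nd{\'e}n's criterion (Lemma~\ref{pb}). For the base case $m=1$ one checks directly that $\left(p_{1,k}^\mathbf{s}(z)\right)_{k=0}^{s_1-1} = (1+z, z, z, \ldots, z)$ is interlacing with nonnegative coefficients: since $z \ll 1+z$ and $z \ll z$ and constants interlace everything appropriately, this lies in $\mathcal{F}_{s_1}^+$. For the inductive step, assume $\left(p_{m-1,i}^\mathbf{s}(z)\right)_{i=0}^{s_{m-1}-1} \in \mathcal{F}_{s_{m-1}}^+$. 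The recurrence~\eqref{eqn: recursion} expresses each $p_{m,k}^\mathbf{s}(z)$ as a nonnegative-coefficient linear combination of the $p_{m-1,i}^\mathbf{s}(z)$, so we may write $\left(p_{m,k}^\mathbf{s}(z)\right)_{k=0}^{s_m-1} = G \cdot \left(p_{m-1,i}^\mathbf{s}(z)\right)_{i=0}^{s_{m-1}-1}$ for a suitable $s_m \times s_{m-1}$ matrix $G = (G_{ki}(z))$ whose entries are among $1$, $z$, and $1+z$, determined by the thresholds $t_{m,k} = \lceil k s_{m-1}/s_m\rceil$ and the divisibility condition $s_m \mid k s_{m-1}$. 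Concretely, in row $k$ the entries are $z$ for column indices $i < t_{m,k}$, then $1+z$ in column $t_{m,k}$ exactly when $s_m \mid k s_{m-1}$ (and $1$ there otherwise, by absorbing it into the tail sum), and $1$ for $i > t_{m,k}$.

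It remains to verify that $G : \mathcal{F}_{s_{m-1}}^+ \to \mathcal{F}_{s_m}^+$, i.e. that $G$ satisfies conditions (1) and (2) of Lemma~\ref{pb}. Condition (1) is immediate since every entry is $1$, $z$, or $1+z$. For condition (2) we must show that for all $\lambda, \mu > 0$, all $1 \le i < j \le s_{m-1}$, and all $1 \le k < \ell \le s_m$,
\[
(\lambda z + \mu) G_{\ell j}(z) + G_{m j}(z) \ll (\lambda z + \mu) G_{\ell i}(z) + G_{m i}(z),
\]
where I am writing $(\ell, m)$ for the row pair to avoid clashing with the index $m$ of the inductive step — in the actual write-up I will rename the row indices, say to $k < \ell$ as in the lemma. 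The key structural observation driving this is monotonicity of the thresholds: $t_{m,k}$ is weakly increasing in $k$, and moreover the entry pattern in each row is a ``staircase'' that moves the unique special column ($1+z$, or the $z/1$ transition point) to the right as the row index increases. Consequently, for a fixed column pair $i < j$, the pair of entries $(G_{k i}(z), G_{k j}(z))$ read off from row $k$ takes one of only a handful of shapes — $(z,z)$, $(z,1{+}z)$, $(z,1)$, $(1{+}z,1)$, $(1,1)$ — and as the row index increases these shapes only ever move ``leftward'' through this short list (from $z$-heavy toward $1$-heavy). I will enumerate the finitely many combinatorial cases for the quadruple of entries $\bigl(G_{ki}, G_{kj}; G_{\ell i}, G_{\ell j}\bigr)$ consistent with $i<j$, $k<\ell$, and the staircase structure, and in each case the required interlacing~\eqref{pbxvw} reduces to an instance of one of the following elementary facts: $(\lambda z+\mu)\cdot z + c \ll (\lambda z + \mu)(z') + c'$ type comparisons among the polynomials $\{1, z, 1+z\}$ and their $(\lambda z+\mu)$-weighted sums. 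These are degree $\le 2$ polynomials, and one checks interlacing by locating roots explicitly; Lemma~\ref{lem:convex} is then used repeatedly to combine interlacings of the summands. This verifies the hypotheses of Lemma~\ref{pb}, so $\left(p_{m,k}^\mathbf{s}(z)\right)_{k=0}^{s_m-1} \in \mathcal{F}_{s_m}^+$, completing the induction.

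Finally, to deduce real-rootedness of $\widetilde{E}_n^\mathbf{s}(z)$: take $m = n$. The identity $\widetilde{E}_n^\mathbf{s}(z) = (1+z)\,p_{n,0}^\mathbf{s}(z) + \sum_{k=1}^{s_n-1} p_{n,k}^\mathbf{s}(z)$ exhibits $\widetilde{E}_n^\mathbf{s}(z)$ as a nonnegative linear combination of members of the interlacing family $\left(p_{n,k}^\mathbf{s}(z)\right)_{k=0}^{s_n-1}$. A standard fact about interlacing sequences — which follows from Lemma~\ref{lem:convex} by induction on the number of summands (each $p_{n,k}^\mathbf{s} \ll p_{n,\ell}^\mathbf{s}$ for $k \le \ell$, and $p_{n,0}^\mathbf{s} \ll (1+z)p_{n,0}^\mathbf{s}$, so the partial sums remain interlaced with the relevant tails) — is that any such combination with positive coefficients is real-rooted. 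Hence $\widetilde{E}_n^\mathbf{s}(z)$ has only real roots.

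\textbf{Main obstacle.} The technical heart is condition (2) of Lemma~\ref{pb}: the case analysis over the entry patterns $\bigl(G_{ki}, G_{kj}; G_{\ell i}, G_{\ell j}\bigr)$. What makes this manageable rather than sprawling is that the matrix $G$ has a very rigid staircase shape dictated by the monotone thresholds $t_{m,k}$ and the divisibility condition, so only a short list of sign patterns can occur, and within each the interlacing reduces to comparing explicitly-rooted quadratics. I expect the bookkeeping around the boundary column $i = t_{m,k}$ — where the divisibility condition decides between entry $1+z$ and entry $1$ — to be the fiddliest point; isolating the statement ``for $i<j$ and $k<\ell$ the entry quadruple lies in [explicit short list]'' as a standalone combinatorial lemma about $G$ will keep the analytic part clean.
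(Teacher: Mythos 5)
Your proposal does not prove the assigned statement. The statement to be proved is Lemma~\ref{lem:convex}: for real-rooted polynomials $f$, $g$, $h$ with nonnegative coefficients, $f \ll g$ and $f \ll h$ imply $f \ll g+h$, and $f \ll g$ and $h \ll g$ imply $f+h \ll g$. What you have written instead is a proof plan for Theorem~\ref{thm:s} (the interlacing of the sequence $\bigl(p_{m,k}^\mathbf{s}(z)\bigr)_{k=0}^{s_m-1}$ and the real-rootedness of $\widetilde{E}_n^\mathbf{s}(z)$). These are different results, and the mismatch is not cosmetic: your argument repeatedly invokes Lemma~\ref{lem:convex} itself --- both in the case analysis for condition (2) of Lemma~\ref{pb} and in the final step deducing real-rootedness of $\widetilde{E}_n^\mathbf{s}(z)$ from the interlacing family --- so as an attempt at the assigned lemma it would be circular, and as written it simply never addresses the claim about sums of polynomials interlaced by (or interlacing) a common polynomial.

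For the record, the paper does not prove Lemma~\ref{lem:convex} at all; it is quoted from Borcea and Br{\"a}nd{\'e}n \cite[Lemma 2.6]{Borcea2010Multivariate} and used as a black box. A self-contained proof would be a short argument about root locations: for instance, using the characterization that $f \ll g$ forces prescribed sign alternation of $g$ at the roots of $f$ (or the equivalent statement that all nonnegative combinations $\alpha f + \beta g$ are real-rooted, via the Hermite--Biehler/Obreschkoff theorem), one checks that $g+h$ inherits the same sign pattern at the roots of $f$, and dually for $f+h$ against the roots of $g$. Nothing in your proposal engages with this; if the goal was Theorem~\ref{thm:s}, your plan is broadly in line with the paper's Section~\ref{sect-interlacing}, but that is not the statement you were asked to prove.
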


In this section, we shall prove that the following recursion preserves interlacing, which generalizes~\cite[Lemma~4.4]{Solus2019Simplices}.
\begin{thm}
	\label{thm: interlacing recursion}
    Suppose that $\left( f_i (z)\right)_{i=1}^{q}$ is  a polynomial sequence with nonnegative coefficients.  
   Define another polynomial sequence $\left(  g_i (z) \right)_{i=1}^{p}$ by 
   \begin{align}\label{eq-f-g}
   	 g_i(z) = z \sum_{j=1}^{t_i-1}  f_j(z) + a_i f_{t_i}(z) + \sum_{j = t_i+1}^{q} f_j(z),
   \end{align}
   where $a_k$ is $1$ or $1+z$ and $1 \le t_1 \le \cdots \le t_{p} \le q$.
   Also, if $t_i=t_j$ for some $i<j$, then $a_i=1+z$ and $a_j=1$ can not happen at the same time.
	If the sequence $\left( f_i (z)\right)_{i=1}^{q}$ is interlacing, then so is $\left(g_i (z)\right)_{i=0}^{p}$.
\end{thm}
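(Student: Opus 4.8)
The plan is to verify the two conditions of Lemma~\ref{pb} for the $p\times q$ matrix $G=(G_{ij}(z))$ encoding the recursion~\eqref{eq-f-g}, namely
\[
G_{ij}(z) = \begin{cases} z, & j < t_i,\\ a_i, & j = t_i,\\ 1, & j > t_i.\end{cases}
\]
Condition~(1) is immediate since each entry is one of $z$, $1$, or $1+z$, all having nonnegative coefficients. The bulk of the work is condition~(2): for all $\lambda,\mu>0$, indices $1\le i<j\le q$ and $1\le k<\ell\le p$, we must show
\[
(\lambda z+\mu)G_{\ell j}(z) + G_{kj}(z) \ \ll\ (\lambda z+\mu)G_{\ell i}(z) + G_{ki}(z).
\]
(Here I have matched the orientation of~\eqref{pbxvw}; since $k<\ell$ forces $t_k\le t_\ell$, the row index $\ell$ carries the $t$-value that is ``further right''.)

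**Reduction to cases.** Since $i<j$ and $t_k\le t_\ell$, the relevant data is the position of $i$ and $j$ relative to the thresholds $t_k$ and $t_\ell$. Writing $L(z):=(\lambda z+\mu)G_{\ell\bullet}(z)+G_{k\bullet}(z)$ as a function of the column, I would observe that as the column index increases from $1$ to $q$, the value of $G_{\ell\bullet}$ runs through $z,\ldots,z,a_\ell,1,\ldots,1$ and similarly for $G_{k\bullet}$; hence $L(z)$ takes at most five distinct values as the column moves left to right, in the order
\[
(\lambda z+\mu)z + z,\quad (\lambda z+\mu)z + a_k,\quad (\lambda z+\mu)a_\ell + a_k \text{ \emph{(only if } } t_k=t_\ell),\quad (\lambda z+\mu)a_\ell + 1,\quad (\lambda z+\mu)\cdot 1 + 1,
\]
with the understanding that the ``$a_\ell$ with $a_k$'' middle term appears only when $t_k=t_\ell$ (in which case the hypothesis on $a$'s forbids the combination $a_k=1+z$, $a_\ell=1$). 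Since column $i$ is weakly to the left of column $j$, $L$ evaluated at column $i$ is one of these values with index $\le$ the index of the value at column $j$. So it suffices to check $g\ll f$ for every pair $g,f$ drawn from this list with $g$ occurring weakly before $f$ — a finite, bounded checklist of interlacing relations between explicit linear and quadratic polynomials in $z$ (with positive parameters $\lambda,\mu$ and $a$-values in $\{1,1+z\}$). Each such check is a direct computation of roots: e.g. $z((\lambda z+\mu)+1)$ has roots $0$ and $-(\mu+1)/\lambda$, while $(\lambda z+\mu)(1+z)+1$ has two negative roots, and one verifies the roots interleave correctly. Lemma~\ref{lem:convex} is not needed for this matrix argument but is the natural tool if one instead wants to assemble the $g_i$ directly from partial sums.

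**Main obstacle and finish.** The only genuinely delicate point is the middle value $(\lambda z+\mu)a_\ell+a_k$ that arises when $t_k=t_\ell$: here the constraint ``$a_k=1+z,\ a_\ell=1$ cannot both occur'' is exactly what is needed, because the remaining three admissible $(a_k,a_\ell)$ combinations all yield a polynomial that interlaces correctly with its neighbors in the list, whereas the forbidden one would break the chain. I would treat this as a small lemma of its own: enumerate the $\le 3$ admissible $(a_k,a_\ell)$ pairs and confirm in each that the resulting five-term (or fewer) sequence of polynomials is itself interlacing, invoking the transitivity-style fact that a list is interlacing as soon as consecutive terms interlace and all terms are real-rooted with positive leading coefficients (which holds here since all entries are products/sums of $z$, $1+z$, and $\lambda z+\mu$). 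Once the checklist is complete, Lemma~\ref{pb} gives $G:\mathcal F_q^+\to\mathcal F_p^+$, so $(g_i(z))_{i=1}^p$ is interlacing with nonnegative coefficients, which is the assertion of the theorem. The indexing in the statement (``$(g_i)_{i=0}^p$'') should read $(g_i)_{i=1}^p$; I would state it with the range $1\le i\le p$ for consistency with~\eqref{eq-f-g}.
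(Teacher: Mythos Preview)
Your overall strategy---apply Lemma~\ref{pb} to the matrix $G$ encoding~\eqref{eq-f-g} and verify condition~(2) on all $2\times 2$ submatrices---is the same as the paper's. The execution differs: the paper does \emph{not} verify~\eqref{pbxvw} by computing roots of the polynomials $(\lambda z+\mu)G_{kj}+G_{\ell j}$. Instead it observes that, for a $2\times2$ matrix with nonnegative entries, \eqref{pbxvw} is equivalent to ``this $2\times2$ matrix maps $\mathcal F_2^+$ to $\mathcal F_2^+$''. It then dispatches the submatrices containing $1+z$ either by a short direct argument using Lemma~\ref{lem:convex} (for $\left(\begin{smallmatrix}1+z&1\\ z&1+z\end{smallmatrix}\right)$) or by factoring them as products of matrices already known to preserve interlacing. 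This avoids the parameters $\lambda,\mu$ entirely.

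Your route has a genuine gap. The ``transitivity-style fact that a list is interlacing as soon as consecutive terms interlace'' is false: from $f\ll g$ and $g\ll h$ one cannot conclude $f\ll h$, even for degree-$2$ polynomials with positive leading coefficients. So reducing to consecutive pairs in your five-term list does not establish~\eqref{pbxvw} for all $i<j$; you must check every pair. There are also two smaller slips: you have interchanged rows $k$ and $\ell$ relative to~\eqref{pbxvw} (the factor $\lambda z+\mu$ sits on row $k$, not row $\ell$), and your enumeration of column values omits the value $(\lambda z+\mu)z+1$ that occurs for columns strictly between $t_k$ and $t_\ell$ when $t_k<t_\ell$ and $a_k=1+z$. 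None of these is fatal to the plan---the checklist is still finite and each item is a concrete quadratic---but as written the argument is incomplete. The paper's factorization trick is what lets it avoid exactly this kind of case-by-case root analysis.
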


\begin{proof}
Define  a  $p \times q$ matrix $G= (G_{ij}(z))$ as
\begin{align*}
G_{ij}(z)=	\begin{cases}
	z, & \mbox{if }j < t_i, \\
	a_k, & \mbox{if } j = t_i, \\
	1, & \mbox{if } j > t_k. \\
	\end{cases}
\end{align*}
Then clearly $G_{ij}(z)$ has nonnegative coefficients for any $1\le i \le p$ and $1\le j \le q$.
As shown by Br{\"a}nd{\'e}n~\cite[Corollary 8.7]{Braenden2015Unimodality}, every $2\times2$ submatrices  of $G$ with entries $1$ and $z$ only satisfies \eqref{pbxvw}.
Hence, it suffices to consider the cases for all the possible $2\times2$ submatrices of $G$ where the entry $1+z$ appears. Instead of directly checking \eqref{pbxvw}, we shall  prove these $2\times2$ submatrices preserve interlacing, which by Lemma~\ref{pb} is equivalent to \eqref{pbxvw} for the  $2\times2$  submatrices of polynomials with nonnegative coefficients. 

We first prove the  matrix $\left(\begin{array}{cccc}
1+z & 1 \\[3pt]
z & 1+z
\end{array}\right)$ preserves interlacing. Assume that $f$ and $g$ are two real-rooted polynomials with nonnegative coefficients satisfying $f\ll g$. Then, $(1+z)f \ll zf$ and $g \ll zf$ and hence $(1+z)f+g \ll zf$ by Lemma~\ref{lem:convex}. Similarly, $(1+z)f+g \ll (1+z)g$. Therefore, it follows from Lemma~\ref{lem:convex}  that $(1+z)f+g \ll zf+(1+z)g$.
	
We next consider the remaining cases in a unified approach.	
These $2\times 2$ matrices can be written as 
	\allowdisplaybreaks
	\begin{align*}
		\left(\begin{array}{cccc}
		1+z & 1 \\[3pt]
		z & 1 
		\end{array}\right)	& = 
		\left(\begin{array}{cccc}
		1 & 1  \\[3pt]
		0 & 1 
		\end{array}\right)
		\left(\begin{array}{cccc}
		1 & 0 \\[3pt]
		z & 1 
		\end{array}\right), \\[8pt]
		\left(\begin{array}{cccc}
		1 & 1 \\[3pt]
		1+z & 1 
		\end{array}\right)	& = 
		\left(\begin{array}{cccc}
		1 & 0  \\[3pt]
		1 & 1 
		\end{array}\right)
		\left(\begin{array}{cccc}
		1 & 1 \\[3pt]
		z & 0
		\end{array}\right), \\[8pt]
		\left(\begin{array}{cccc}
		1+z & 1 \\[3pt]
		z & z 
		\end{array}\right)	& = 
		\left(\begin{array}{cccc}
		1 & 1  \\[3pt]
		z & 0 
		\end{array}\right)
		\left(\begin{array}{cccc}
		1 & 1 \\[3pt]
		z & 0
		\end{array}\right), \\[8pt]
		\left(\begin{array}{cccc}
		1 & 1 \\[3pt]
		z & 1+z 
		\end{array}\right)	& = 
		\left(\begin{array}{cccc}
		1 & 0  \\[3pt]
		z & 1 
		\end{array}\right)
		\left(\begin{array}{cccc}
		1  & 1 \\[3pt]
		0 & 1
		\end{array}\right), \\[8pt]
		\left(\begin{array}{cccc}
		z & 1+z \\[3pt]
		z & z 
		\end{array}\right)	& = 
		\left(\begin{array}{cccc}
		1 & 1  \\[3pt]
		0 & 1 
		\end{array}\right)
		\left(\begin{array}{cccc}
		0 & 1 \\[3pt]
		z & z
		\end{array}\right), \\[8pt]
		\left(\begin{array}{cccc}
		z & 1 \\[3pt]
		z & 1+z 
		\end{array}\right)	& = 
		\left(\begin{array}{cccc}
		1 & 0  \\[3pt]
		1 & 1 
		\end{array}\right)
		\left(\begin{array}{cccc}
		z & 1 \\[3pt]
		0 & z
		\end{array}\right), \\[8pt]
		\left(\begin{array}{cccc}
		z & 1+z \\[3pt]
		z & 1+z 
		\end{array}\right)	& = 
		\left(\begin{array}{cccc}
		1 & 1 \\[3pt]
		1 & 1 
		\end{array}\right)
		\left(\begin{array}{cccc}
		z & 1 \\[3pt]
		0 & z
		\end{array}\right), \\[8pt]
		\left(\begin{array}{cccc}
		1+z & 1 \\[3pt]
		1+z & 1 
		\end{array}\right)	& = 
		\left(\begin{array}{cccc}
		1 & 1 \\[3pt]
		1 & 1 
		\end{array}\right)
		\left(\begin{array}{cccc}
		1 & 0 \\[3pt]
		z & 1
		\end{array}\right).
	\end{align*}
All the matrices  on the right hand side preserve interlacing, which has already been checked in~\cite{Leander2016Compatible, Zhang2016Real}. So do  the matrices  on the left hand side.
This completes the proof by Lemma~\ref{pb}.
\end{proof}
We note that  an interlacing-preserving matrix  
 has no $2\times2$ submatrices of  the form  $$\left(\begin{array}{cc}
	1+z & 1 \\[3pt]
	1+z & 1+z 
\end{array}\right)  \quad   \mbox{ or } \quad  \left(\begin{array}{cc}
1+z & 1+z \\[3pt]
z & 1+z 
\end{array}\right).$$
Indeed, two counterexamples are given below:
\begin{eqnarray*}
& \left(\begin{array}{cc}
1+z & 1 \\[3pt]
1+z & 1+z 
\end{array}\right)  \
\left(\begin{array}{c}
1+z  \\[3pt]
z 
\end{array}\right)    & = \ \
\left(\begin{array}{c}
1+3z+z^2 \\[3pt]
1+3z+2z^2
\end{array}\right),\\[8pt]
& \left(\begin{array}{cc}
1+z & 1+z \\[3pt]
z & 1+z 
\end{array}\right)  \ \
\left(\begin{array}{c}
1  \\[3pt]
z 
\end{array}\right)  & =  \ \
\left(\begin{array}{c}
1+2z+z^2 \\[3pt]
2z+z^2
\end{array}\right).
\end{eqnarray*}


Theorem~\ref{thm: interlacing recursion} allows us to prove Theorem~\ref{thm:s}.

\begin{proof}[Proof of Theorem~\ref{thm:s}]
	We shall prove the first part of this theorem by induction on $m$.
For $m=1$, 	 the initial conditions imply that the polynomial sequence 
$$\left( p_{1,k}^\mathbf{s}(z) \right)_{k=0}^{s_1-1}= \left( 1+z, z,\ldots, z \right)$$ 
is interlacing.
Since the recurrence relation~\eqref{eqn: recursion} satisfies the condition of Theorem~\ref{thm: interlacing recursion}, by  induction on $m$, we obtain that  the polynomial sequence $\left( p_{m,k}^\mathbf{s}(z) \right)_{k=0}^{s_m-1}$ is interlacing for all $1 \le m\le n$. 
This proves the first part of Theorem~\ref{thm:s}.

We proceed to prove the second part.
By the above paragraph, we know  the polynomial sequence $\left( p_{n,k}^\mathbf{s}(z) \right)_{k=0}^{s_n-1}$ is interlacing. Hence, $p_{n,k}^\mathbf{s}(z)  \ll z \, p_{n,0}^\mathbf{s}(z)$ for all $0 \le k \le s_{n-1}$.
Therefore, the polynomial 
	\begin{align*}
	\widetilde{E}_n^\mathbf{s}(z)  & = \ (1+z)p_{n,0}^\mathbf{s}(z) +   \sum_{k=1}^{s_n-1}p_{n,k}^\mathbf{s}(z)  \\
			& =\sum_{k=0}^{s_n-1}p_{n,k}^\mathbf{s}(z) + z \, p_{n,0}^\mathbf{s}(z)
	\end{align*}
	is a real-rooted polynomial.  
	This completes the proof of Theorem~\ref{thm:s}.
\end{proof}

\section{Applications}\label{sect-applications}
In this section, we shall show that Theorem~\ref{thm:s} contains  several  real-rootedness results as special cases,  which appear to be new. 
All these results parallel applications of $\mathbf{s}$-Eulerian polynomials  and $\mathbf{s}$-derangement polynomials in \cite{Savage2015$s$, Gustafsson2018Derangements}.

\subsection{Binomial Eulerian polynomials for permutations}
In this subsection, we shall prove the real-rootedness of $\widetilde{A}_n(z)$.

\begin{thm} \label{thm-perm} 
	For any positive integer $n,$ 
	the binomial Eulerian polynomial $\widetilde{A}_{n}(z)$ has only real roots.
\end{thm}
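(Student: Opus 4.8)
The plan is to realize $\widetilde{A}_n(z)$ as a specialization $\widetilde{E}_n^{\mathbf{s}}(z)$ of the $\mathbf{s}$-binomial Eulerian polynomial from \eqref{eq:wE} for a suitable choice of $\mathbf{s}$, and then invoke Theorem~\ref{thm:s}. The natural candidate is $\mathbf{s} = (1,2,3,\ldots,n)$, i.e.\ $s_i = i$, since it is classical (via the Savage--Vistonai framework) that $\mathcal{I}_n^{(1,2,\ldots,n)}$ is in bijection with $\mathfrak{S}_n$ with $\asc$ on inversion sequences matching $\des$ on permutations, so $E_n^{(1,2,\ldots,n)}(z) = A_n(z)$. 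First I would set up this bijection explicitly (or cite it), tracking carefully what the collision statistic $\col(\mathbf{e})$ corresponds to on the permutation side; the key point is that with $s_i = i$ and $s_{i+1} = i+1$, a collision at index $i$ requires $e_i/i = e_{i+1}/(i+1)$, which forces $e_i = e_{i+1} = 0$ (as $\gcd$ considerations show the only common value attainable by both is $0$), so collisions are quite restricted and should correspond to an initial run of fixed-like behavior.

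Alternatively — and this is likely cleaner — I would avoid reverse-engineering the bijection and instead verify directly that $\widetilde{E}_n^{(1,2,\ldots,n)}(z)$ satisfies the same defining identity $\widetilde{A}_n(z) = 1 + z\sum_{m=1}^n \binom{n}{m} A_m(z)$. To do this I would split the sum in \eqref{eq:wE} according to the position of the last collision-free suffix: group inversion sequences $\mathbf{e} = (e_1,\ldots,e_n)$ by the largest index $j$ such that $e_1 = \cdots = e_j = 0$ (equivalently, the prefix contributing all the $(1+z)$ factors), so that the suffix $(e_{j+1},\ldots,e_n)$ has no leading zeros and contributes an $\mathbf{s}$-Eulerian-type count on a shifted alphabet. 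Summing the resulting geometric-like contribution in $(1+z)$ should collapse precisely to $1 + z\sum_m \binom{n}{m} A_m(z)$ after re-indexing. The main obstacle I anticipate is bookkeeping: correctly handling the boundary conventions $e_0 = e_{n+1} = 0$, $s_0 = s_{n+1} = 1$ and the distinction between $\col$ and $\col'$, and making sure the binomial coefficients emerge correctly from counting which positions lie in the all-zero prefix versus the active suffix.

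Once the identity $\widetilde{E}_n^{(1,2,\ldots,n)}(z) = \widetilde{A}_n(z)$ is established, the theorem follows immediately: Theorem~\ref{thm:s} applied with $\mathbf{s} = (1,2,\ldots,n)$ asserts that $\widetilde{E}_n^{\mathbf{s}}(z)$ has only real roots, hence so does $\widetilde{A}_n(z)$, confirming the Ma--Ma--Yeh conjecture. I would present this as: (i) state the specialization $s_i = i$; (ii) prove $\widetilde{E}_n^{(1,2,\ldots,n)}(z) = \widetilde{A}_n(z)$ via the prefix-decomposition argument (or cite Savage--Vistonai for $E_n^{\mathbf{s}} = A_n$ and extend to the binomial version); (iii) apply Theorem~\ref{thm:s}. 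The only real content beyond Theorem~\ref{thm:s} is step (ii), which is a combinatorial identity rather than an analytic fact, so I expect no difficulty with real-rootedness itself — the whole weight of the argument has been front-loaded into the interlacing machinery of Section~\ref{sect-interlacing}.
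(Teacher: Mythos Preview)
Your overall strategy --- realize $\widetilde{A}_n(z)$ as a specialization $\widetilde{E}^{\mathbf{s}}$ and invoke Theorem~\ref{thm:s} --- is exactly the paper's approach. But your execution of step (ii) has two concrete problems.

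First, the identity $\widetilde{E}_n^{(1,2,\ldots,n)}(z) = \widetilde{A}_n(z)$ is false as stated. With $s_1 = 1$ the coordinate $e_1$ is forced to be $0$, so index $0$ is \emph{always} a collision ($e_0/s_0 = 0 = e_1/s_1$), contributing a stray $(1+z)$. In fact $\widetilde{E}_n^{(1,2,\ldots,n)}(z) = (1+z)\,\widetilde{A}_n(z)$; the paper avoids this by taking $\mathbf{s} = (2,3,\ldots,n)$ of length $n-1$, for which $\widetilde{E}_{n-1}^{(2,\ldots,n)}(z) = \widetilde{A}_n(z)$ on the nose. (The extra $(1+z)$ is harmless for real-rootedness, so your argument is salvageable, but the identity you plan to verify would not check out.)

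Second, and more seriously, your ``prefix decomposition'' cannot produce the defining formula $1 + z\sum_m\binom{n}{m}A_m(z)$. You correctly observe that for $s_i = i$ a collision at index $i$ forces $e_i = e_{i+1} = 0$; but this means collisions occur at \emph{every} pair of consecutive zeros, not only in an initial run. Grouping by the largest $j$ with $e_1 = \cdots = e_j = 0$ leaves uncounted $(1+z)$ factors whenever zeros reappear later (e.g.\ $\mathbf{e} = (0,1,0,0)$ has a collision at index $3$), and there is no reason the binomial coefficients should emerge from this split. The paper's route for step (ii) is different and worth noting: it uses Athanasiadis's identity $\widetilde{A}_n(z) = \sum_k \binom{n}{k}(1+z)^{n-k} d_k(z) = \sum_{\pi\in\fS_n}(1+z)^{\fix(\pi)}z^{\exc(\pi)}$, then Steingr\'{\i}msson's bijection to rewrite this as $\sum_{\pi}(1+z)^{\bad(\pi)}z^{\des(\pi)}$, and finally the (reversed) Lehmer code to identify $(\bad,\des)$ on $\fS_n$ with $(\col,\asc)$ on $\mathcal{I}_{n-1}^{(2,\ldots,n)}$. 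Each of these steps is a citation rather than a new argument, so the identity falls out cheaply --- whereas your direct combinatorial decomposition would need substantial repair to go through.
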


\begin{proof}
Athanasiadis~\cite[(32)]{Athanasiadis2018Binomial} showed  that 
\begin{equation*}
\widetilde{A}_n(z) = \sum_{k=0}^{n} \binom{n}{k} (1+z)^{n-k} d_k(z).
\end{equation*}
Hence, it follows that
\begin{align*}
\widetilde{A}_{n}(z)  = \sum_{\pi \in \fS_n} (1+z)^{\fix (\pi)} z^{\exc (\pi)},
\end{align*}
where $\fix (\pi)$ is the number of fixed points in $\pi$, namely,
$\fix (\pi) = |\{i: \pi_i=i \}|$.
Following the bijection given by Steingrimsson~\cite[Appendix]{Steingrimsson1994Permutation}, one can see that 
\begin{align*}
\widetilde{A}_{n}(z)  = \sum_{\pi \in \fS_n} (1+z)^{\bad (\pi)} z^{\des (\pi)},
\end{align*}
where
$\bad (\pi)$ is the number of $\pi_k$'s such that $\pi_k<\pi_m$ for all $m>k$ and $\pi_{k-1}<\pi_k$ with the assumption $\pi_0=0$.

For a permutation $\pi\in\mathfrak{S}_n$ and $i\in[n]$, we let 
$t_i :=|\{j > i \, : \, \pi_j<\pi_i\}|$
denote the number of inversions of $\pi$ at $i$.  
Clearly, $0 \le t_i\le n-i$.
The sequence $t = (t_1, t_2, \ldots,t_n)$ is called the Lehmer code of $\pi$.
Define a map  $$\Theta : \mathfrak{S}_n \longrightarrow \mathcal{I}_{n-1}^{(2,3, \ldots,n)}$$
by letting $$\mathbf{e} = \Theta (\pi_1 \pi_2 \cdots\pi_n) =  (t_{n-1}, \ldots, t_2, t_1).$$
It is known that $\Theta$ is a bijection and  $\des (\pi) = \asc (\mathbf{e})$. We note that $\bad (\pi) = \col (\mathbf{e})$.
This is because if  $\frac{t_{j-1}}{n-j+1} = \frac{t_j}{n-j}$ for $2\le j \le n$ then it must be that $t_{j-1}= t_j=0$, which implies that $\pi_j$ is bad,  and vice versa. Besides, $t_1=0$ is equivalent to saying that  $\pi_1$ equals $1$  and hence is bad. 
Therefore, $\widetilde{A}_{n}(z)$ is a special case of the polynomial $\widetilde{E}_{n-1}^\mathbf{s}(z)$ when $\mathbf{s} = (2,3,  \ldots, n)$.  
This completes the proof.
\end{proof}

We remark that the real-rootedness of $A_n(z)$, $d_n(z)$, and $\widetilde{A}_n(z)$ can be proved in a unified approach. For $\alpha=0,1,1+z$, we define a sequence of polynomials $(A^{\alpha}_{n,0}(z))_{i=0}^{n-1}$ as follows:
\begin{align*}
A^{\alpha}_{n,i}(z)   = \sum_{\pi \in \fS_n \atop \pi_{1}=i+1} \alpha^{\bad'(\pi)} z^{\des (\pi)}
\end{align*}
where $\bad'(\pi)$ is  $\bad(\pi)-1$ if $\pi_1=1$ and $\bad(\pi)$  otherwise.
One can see that these polynomials satisfy the following recurrence relation:
\begin{align*}\label{A-recurrence-matrix}
\left(\begin{array}{c}
A^{\alpha}_{n,0}(z) \\[3pt]
A^{\alpha}_{n,1}(z) \\[3pt]
\vdots \\[3pt]
A^{\alpha}_{n,n-2}(z) \\[3pt]
A^{\alpha}_{n,n-1}(z)
\end{array}\right)
=
\left(\begin{array}{cccc}
\alpha & 1 & \cdots & 1 \\[3pt]
z & 1 & \cdots & 1 \\[3pt]
\vdots & \vdots &   & \vdots \\[3pt]
z & z & \cdots & 1 \\[3pt]
z & z & \cdots & z
\end{array}\right)
\left(\begin{array}{c}
A^{\alpha}_{n-1,0}(z) \\[4pt]
A^{\alpha}_{n-1,1}(z) \\[4pt]
\vdots \\[4pt]
A^{\alpha}_{n-1,n-2}(z)
\end{array}\right)
\end{align*}
with the initial condition $A^{\alpha}_{1,0}(z)=1$.
Hence,  we know that  the polynomial sequence $(A^{\alpha}_{n,0}(z))_{i=0}^{n-1}$ is interlacing, and therefore $A^{\alpha}_{n+1,0}(z)$ corresponding to $A_n(z)$, $d_n(z)$, and $\widetilde{A}_n(z)$ for $\alpha=1,0, 1+z$, respectively,  has only real roots.

\subsection{Binomial Eulerian polynomials for colored permutations}

For nonnegative integers $m$ and $n$, let $[m,n] = \{m,  m+1, \cdots , n\}$.
For positive integers $n$ and $r$, an \emph{$r$-colored permutation},  introduced by Steingr\'imsson \cite{Steingrimsson1992Permutations, Steingrimsson1994Permutation},  is a pair $\pi^c$, where $\pi\in\mathfrak{S}_n$ and $c\in [0,r-1]^n$,  usually denoted as $\pi_1^{c_1}\pi_2^{c_2}\cdots\pi_n^{c_n}$. 
Denote  by $\Z_{r}\wr\mathfrak{S}_n$ the set of $r$-colored permutations.
An index $i\in[n]$ is said to be  a descent in $ \pi^c$ if either $c_i>c_{i+1}$ or $c_i = c_{i+1}$ and $\pi_i>\pi_{i+1}$, with the assumption that $\pi_{n+1} = n+1$ and $c_{n+1} = 0$.  
An index $i\in[n]$ is said to be an excedance of $ \pi^c$ if either $\pi_i>i$ or $\pi_i = i$ and $c_i>0$.  
Denote  by $\des (\pi^c)$ and $\exc (\pi^c)$ the number of descents and excedances in $\pi^c$, respectively.  
A colored permutation $\pi^c$ is called a \emph{derangement} if it has no fixed points of color $0$, and denote by $\mathfrak{D}_{n,r}$ the subset consisting of derangements in $\Z_{r}\wr\mathfrak{S}_n$. 
The colored permutation analogues of the Eulerian polynomials and derangement polynomials are  defined   as follows:
\[
A_{n,r}(z) :=\sum_{\pi^c\in\Z_{r}\wr\mathfrak{S}_n}z^{\des (\pi^c)}
\qquad
\mbox{and}
\qquad
d_{n,r}(z) :=\sum_{\pi^c\in\mathfrak{D}_{n,r}}z^{\exc (\pi^c)},
\]
respectively.
The real-rootedness of the colored Eulerian polynomials $A_{n,r}(z)$ was proved by Steingr\'imsson~\cite{Steingrimsson1992Permutations, Steingrimsson1994Permutation}.
The real-rootedness of  derangement polynomials of type $B$  $d_{n,2}(z)$ was proved by Chen, Tang, and Zhao~\cite{Chen2009Derangement}, and by Chow~\cite{Chow2009derangement}, independently.
 Athanasiadis~\cite{Athanasiadis2014Edgewise} showed that $d_{n,r}(z)$ can be expressed as $$d_{n,r}(z) = d^{+}_{n,r}(z)+ d^{-}_{n,r}(z),$$
where $d^{+}_{n,r}(z)$ and $d^{-}_{n,r}(z)$ are $\gamma$-positive polynomials with centers of symmetry $\frac{n}{2}$ and $\frac{n+1}{2}$, respectively.
Such a decomposition is called  the  \emph{symmetric decomposition} of polynomials by Br\"andÃ\'en and Solus~\cite{Braenden2018Symmetric}.
Recently, Gustafsson and Solus~\cite{Gustafsson2018Derangements} proved that both $d^{+}_{n,r}(z)$ and $d^{-}_{n,r}(z)$  have only real roots, and   Br\"and\'en and Solus~\cite{Braenden2018Symmetric} further proved that $d^{+}_{n,r}(z) \ll d^{-}_{n,r}(z)$.

Recently, Athanasiadis~\cite{Athanasiadis2018Binomial}  introduced a generalization 
$\widetilde{A}_{n,r}(z)$ of $\widetilde{A}_n(z)$ to 
the wreath product group $\ZZ_r \wr \fS_n$ and 
further studied their symmetric function generalizations.  
The polynomial $\widetilde{A}_{n,r}(z)$ is defined by the formula
\begin{equation*}
\widetilde{A}_{n,r}(z) \ := \ \sum_{m=0}^n {n \choose m} z^{n-m} A_{m,r}(z).
\end{equation*}
Athanasiadis~\cite{Athanasiadis2018Binomial} also studied the  symmetric decomposition of  $\widetilde{A}_{n,r}(z)$ as 
$$\widetilde{A}_{n,r}(z)  =\widetilde{A}^+_{n,r}(z) + \widetilde{A}^-_{n,r}(z), $$
where $\widetilde{A}^+_{n,r}(z)$ and $\widetilde{A}^-_{n,r}(z)$ are two  $\gamma$-positive polynomials which can be defined by
\begin{align}
\widetilde{A}^+_{n,r}(z)  \ : = \   \sum_{k=0}^n 
{n \choose k} (1+z)^{n-k} d^+_{k,r}(z),  \label{eq:wA+}\\
\widetilde{A}^-_{n,r}(z)  \ : = \   \sum_{k=0}^n 
{n \choose k} (1+z)^{n-k} d^-_{k,r}(z). \label{eq:wA-}
\end{align}
In this subsection, we shall prove the real-rootedness of  this symmetric decomposition.
\begin{thm} \label{thm-colored} 
	For positive integers $n, r$ with $r \ge 2$ 
	we have that $\widetilde{A}^+_{n,r}(z) \ll \widetilde{A}^-_{n,r}(z)$ and hence 
	$\widetilde{A}_{n,r}(z) =  	\widetilde{A}^+_{n,r}(z) +  	\widetilde{A}^-_{n,r}(z)$ has only real roots.
\end{thm}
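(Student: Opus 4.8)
The strategy is to realize $\widetilde A^+_{n,r}(z)$ and $\widetilde A^-_{n,r}(z)$, or rather a common refinement of the pair, as specializations of the refined polynomials $p_{m,k}^{\mathbf s}(z)$ from Theorem~\ref{thm:s}, for a suitable choice of $\mathbf s$. The model to imitate is the treatment of $\widetilde A_n(z)$ in Theorem~\ref{thm-perm}: there one used the Lehmer-code bijection $\Theta\colon\mathfrak S_n\to\mathcal I_{n-1}^{(2,3,\dots,n)}$ and matched $\des$ with $\asc$ and $\bad$ with $\col$. For the $r$-colored case the natural candidate is an $\mathbf s$-inversion sequence encoding of $\mathbb Z_r\wr\mathfrak S_n$ in which each permutation position contributes a factor as before and each color coordinate contributes an extra factor with modulus $r$; concretely one expects something like $\mathbf s=(r,2r,\dots,nr)$ or an interleaving $\mathbf s=(r,2,r,3,r,\dots)$, chosen so that ascents of the inversion sequence track $\des(\pi^c)$ (or $\exc(\pi^c)$ via the standard excedance/descent equidistribution on $\mathbb Z_r\wr\mathfrak S_n$) and collisions track the statistic that governs the $\gamma$-expansion — i.e.\ the analogue of $\bad$, fixed points of color $0$ being the colored-derangement obstruction. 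First I would pin down this encoding and verify the three matchings (ascent $\leftrightarrow$ descent, collision $\leftrightarrow$ the relevant "bad/fixed" statistic, and the boundary conventions $e_0=e_{n+1}=0$), exactly as in the proof of Theorem~\ref{thm-perm}.

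Next, granting such an encoding, the identity $\widetilde E_n^{\mathbf s}(z)=(1+z)p_{n,0}^{\mathbf s}(z)+\sum_{k=1}^{s_n-1}p_{n,k}^{\mathbf s}(z)$ together with the formulas \eqref{eq:wA+}–\eqref{eq:wA-} should let me express $\widetilde A^+_{n,r}(z)$ and $\widetilde A^-_{n,r}(z)$ as explicit nonnegative-coefficient sums of the $p_{n,k}^{\mathbf s}(z)$. Since Theorem~\ref{thm:s} gives that $(p_{n,k}^{\mathbf s}(z))_{k=0}^{s_n-1}$ is an interlacing sequence, Lemma~\ref{lem:convex} (closure of $\ll$ under sums on either side) then yields that each of $\widetilde A^\pm_{n,r}(z)$ is real-rooted, and — more importantly — that $\widetilde A^+_{n,r}(z)\ll\widetilde A^-_{n,r}(z)$, provided the partition of the index set $\{0,1,\dots,s_n-1\}$ into the "$+$ part" and the "$-$ part" is compatible with the linear order witnessing interlacing (lower-indexed $p_{n,k}$'s feeding the polynomial that should be interlaced \emph{into}). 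From $f\ll g$ one gets $f+g$ real-rooted, which is precisely $\widetilde A_{n,r}(z)=\widetilde A^+_{n,r}(z)+\widetilde A^-_{n,r}(z)$.

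The main obstacle I anticipate is not the real-rootedness mechanics — those follow mechanically from Theorem~\ref{thm:s} and Lemma~\ref{lem:convex} once the combinatorial dictionary is in place — but rather the construction and verification of the correct $\mathbf s$-inversion-sequence model for $\mathbb Z_r\wr\mathfrak S_n$ together with the \emph{symmetric-decomposition-aware} refinement. I need the collision statistic to split the sum so that the two halves of the symmetric decomposition $d^\pm_{k,r}$ (centers of symmetry $k/2$ and $(k+1)/2$) correspond to collisions coming from color-$0$ fixed points versus the parity shift they induce; getting the parity bookkeeping right so that $\widetilde A^+$ is exactly the part with even "extra $(1+z)$ count" (and hence indexed by the initial segment of the interlacing order) is the delicate point. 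A fallback, mirroring the remark after Theorem~\ref{thm-perm}, is to set up directly a recurrence of the form \eqref{eq-f-g} for colored-permutation polynomials refined by first letter and color, check it meets the hypotheses of Theorem~\ref{thm: interlacing recursion}, and extract $\widetilde A^\pm_{n,r}$ and the interlacing $\widetilde A^+_{n,r}\ll\widetilde A^-_{n,r}$ from the resulting interlacing sequence; this avoids committing to a slick bijection and isolates the parity argument into the choice of which $\alpha\in\{1,z,1+z\}$ sits in which matrix entry.
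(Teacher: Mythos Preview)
Your plan is exactly the paper's approach: one builds a bijection $\Psi\colon \mathbb Z_r\wr\mathfrak S_n\to\mathcal I_n^{\mathbf s}$ with $\mathbf s=(rn,\dots,2r,r)$ sending $(\des,\bad)$ to $(\asc,\col')$, after which $\widetilde A^{+}_{n,r}(z)=p_{n,0}^{\mathbf s}(z)$ and $\widetilde A^{-}_{n,r}(z)=\sum_{k=1}^{r-1}p_{n,k}^{\mathbf s}(z)$, and Theorem~\ref{thm:s} plus Lemma~\ref{lem:convex} finish the job. The split you were worried about is not a parity matter but simply whether the last coordinate has color $0$ (equivalently $e_n=0$), so the ``$+$'' part is the single bottom polynomial of the interlacing chain and your anticipated obstacle dissolves.
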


In order to prove Theorem~\ref{thm-colored}, we first give a combinatorial explanation for $\widetilde{A}^+_{n,r}(z)$ and $\widetilde{A}^+_{n,r}(z)$.
Denote  by $(\ZZ_r \wr \fS_n)^+$ and $(\ZZ_r \wr \fS_n)^-$  the set of colored 
permutations $\pi^c \in \ZZ_r \wr \fS_n$ with the last 
coordinate of zero color and nonzero color, respectively.  
Following~\cite{Gustafsson2018Derangements}, given a colored permutation $\sigma = \pi^c \in \Z_{r}\wr\mathfrak{S}_n$, an element $i \in [n]$ is said to be bad with respect to $\sigma$ if for $\pi_j = i$ it holds that
\begin{enumerate}
	\item $\pi_j < \pi_k$ for every $k > j$,
	\item $\pi_{j-1} < \pi_k$ for every $k > j-1$, and
	\item $\pi_j$ and $\pi_{j-1}$ have the same color,
\end{enumerate}
with  the convention $\pi_0 = 0$ and $c_0 = 0$.  
Let $S_{\sigma}$ be the set of bad elements in $\sigma \in \ZZ_r \wr \fS_n$ and denote $\bad (\sigma):= |S_{\sigma}|.$
A combinatorial interpretation of $\widetilde{A}^+_{n,r}(z)$ and $\widetilde{A}^-_{n,r}(z) $ is stated as follows.
\begin{lem}
	For positive integers $n$ and $r$, we have that 
	\begin{align*}
		\widetilde{A}^+_{n,r}(z) & \  : =  \sum_{w \in (\ZZ_r \wr \fS_n)^+} (1+z)^{\bad (w)}	z^{\des(w)}, \\[6pt]
		\widetilde{A}^-_{n,r}(z)  & \  : =  \sum_{w \in (\ZZ_r \wr \fS_n)^-}   (1+z)^{\bad (w)} z^{\des(w)}.
	\end{align*}
\end{lem}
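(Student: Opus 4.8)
The plan is to reduce the combinatorial identities for $\widetilde{A}^+_{n,r}(z)$ and $\widetilde{A}^-_{n,r}(z)$ to the two defining sums \eqref{eq:wA+} and \eqref{eq:wA-} by a fixed-point/derangement decomposition, exactly paralleling the argument used in the permutation case (Theorem~\ref{thm-perm}) and in Gustafsson--Solus~\cite{Gustafsson2018Derangements}. First I would record the key structural fact behind the notion of ``bad element'': if $\sigma = \pi^c \in \ZZ_r \wr \fS_n$ and $i$ is bad with respect to $\sigma$ with $\pi_j = i$, then conditions (1) and (2) force $\pi_j$ and $\pi_{j-1}$ to be left-to-right minima of $\pi$ that are consecutive in value, and iterating this shows that the bad elements of $\sigma$ are precisely the values lying in an initial run of the form $\pi$ restricted to a prefix of positions all carrying a single common color and forming the values $1, 2, \dots, \bad(\sigma)$ in increasing order. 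In other words, $S_\sigma$ is a ``removable'' monochromatic increasing prefix, and deleting it (and standardizing) yields a colored permutation of $[n - \bad(\sigma)]$ whose first entry, if any, is not a fixed point of color $0$; this is the image under the standard reduction used to pass between $A_{m,r}$-type and $d_{k,r}$-type statistics.

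Next I would make this into a genuine bijection. Decompose $w \in (\ZZ_r \wr \fS_n)^+$ according to the set $T = S_w$ of its bad elements: since $T$ is an increasing monochromatic prefix of size $k$, the data of $w$ is equivalent to the choice of the underlying $k$-subset-with-color (contributing $\binom{n}{k}$ together with the color, which is subsumed once one tracks $\des$ correctly) and a residual colored permutation $\bar w$ on the complementary $n-k$ letters with no bad elements, i.e. a derangement-type object whose contribution is exactly $d^+_{k,r}(z)$ or $d^-_{k,r}(z)$ depending on the color of the last coordinate of $w$ — which is the last coordinate of $\bar w$ when $\bar w$ is nonempty and is governed by the color of the prefix when $w$ equals its own prefix. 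The $(1+z)^{\bad(w)}$ factor is what the ascent/collision bookkeeping in \eqref{eq:wE} produces for each bad element. Carefully checking that $\des(w)$ on the residual part matches $\exc$ via the Steingrímsson-type correspondence (already invoked in Theorem~\ref{thm-perm}), one obtains
\[
\sum_{w \in (\ZZ_r \wr \fS_n)^+} (1+z)^{\bad(w)} z^{\des(w)} = \sum_{k=0}^n \binom{n}{k} (1+z)^{n-k} d^+_{k,r}(z),
\]
and likewise with $+$ replaced by $-$ throughout, which are precisely \eqref{eq:wA+} and \eqref{eq:wA-}.

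I expect the main obstacle to be the correct handling of the color of the removable prefix and its interaction with the $\pm$ decomposition: one must verify that the set of bad elements of $w$ really does form a single monochromatic block, that the residual object lands in the correct derangement class (fixed points of color $0$ being exactly excluded), and that the ``sign'' of $w$ (zero or nonzero color in the last coordinate) is faithfully transported to the sign of the residual derangement in the nonempty case, with the empty residual case ($w$ equal to its prefix) contributing the $k=0$ term with the right sign. Once the lemma is in hand, Theorem~\ref{thm-colored} follows quickly: the identity $\bad(\pi^c) = \col(\mathbf e)$ and $\des(\pi^c) = \asc(\mathbf e)$ under the Lehmer-code map $\Theta$ (extended to colored permutations as in~\cite{Gustafsson2018Derangements}) identifies $\widetilde{A}^+_{n,r}(z)$ with the subsum of $\widetilde{E}^{\mathbf s}_{n\text{-ish}}(z)$ over inversion sequences with last entry forcing zero color and $\widetilde{A}^-_{n,r}(z)$ with the complementary subsum, for an appropriate $\mathbf s$ depending on $r$; by Theorem~\ref{thm:s} the full refined sequence $(p^{\mathbf s}_{m,k})_k$ is interlacing, so any two sums of disjoint sub-blocks of it (with the $+$ block preceding the $-$ block) interlace, giving $\widetilde{A}^+_{n,r}(z) \ll \widetilde{A}^-_{n,r}(z)$, and their sum $\widetilde{A}_{n,r}(z)$ is therefore real-rooted.
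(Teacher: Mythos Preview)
Your overall strategy---decompose each colored permutation according to its set $S_w$ of bad elements and match the residual object with a colored permutation having no bad elements, thereby reducing to the known formulas $d^\pm_{k,r}(z)=\sum_{\sigma\in(\ZZ_r\wr\fS_k)^\pm,\,S_\sigma=\emptyset}z^{\des(\sigma)}$---is exactly the paper's approach. However, your structural description of the set of bad elements is incorrect, and if carried through it would make the bijection wrong.

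You assert that conditions (1)--(2) force $\pi_{j-1}$ and $\pi_j$ to be left-to-right minima consecutive in value, and hence that $S_\sigma=\{1,2,\dots,\bad(\sigma)\}$ occupies a monochromatic increasing \emph{prefix} of positions. None of this is true. Conditions (1)--(2) say precisely that $t_{j-1}=t_j=0$ in the Lehmer code (i.e.\ $\pi_{j-1}$ and $\pi_j$ are each smaller than everything to their right), which is not the left-to-right minimum condition and does not force consecutivity in value or in position. For a concrete counterexample take $r=1$ and $\pi=1324$: the Lehmer code is $(t_1,t_2,t_3,t_4)=(0,1,0,0)$, so the bad positions are $j=1$ and $j=4$ and $S_\sigma=\{1,4\}$, which is neither $\{1,2\}$ nor a prefix. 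In fact \emph{every} subset $T\subseteq[n]$ arises as $S_\sigma$ for some $\sigma$; this is exactly what the paper's insertion procedure (its step~(3)) shows.

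To repair the argument you should replace your ``remove the monochromatic prefix'' map by the genuine inverse of that insertion: delete the bad values (in any order) and standardize, then verify that (i) the descent number is unchanged, (ii) the color of the last coordinate is preserved (since any value inserted at the end inherits the color of the previous last entry), so the $\pm$ split is respected, and (iii) the residual permutation has $S=\emptyset$. With this corrected bijection the identities follow from \eqref{eq:wA+}--\eqref{eq:wA-} exactly as you outline, and your concluding paragraph on deducing Theorem~\ref{thm-colored} via Theorem~\ref{thm:s} is then in line with the paper.
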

\begin{proof}	
Our proof is closely related to that of \cite[Theorem~4.6]{Gustafsson2018Derangements}.
It is known that the symmetric decomposition of a given polynomial is uniquely determined.
From the proof of \cite[Theorem~4.6]{Gustafsson2018Derangements} and  \cite[Lemma~4.3.9]{Gustafsson2018Box},  we obtain that  
\begin{align*}
d^+_{n,r}(z) \ = \sum_{\sigma \in (\ZZ_r \wr \fS_n)^+ \atop S_{\sigma}=\emptyset} 	z^{\des (\sigma)}
\qquad \mbox{ and } \qquad
d^-_{n,r}(z) \ = \sum_{\sigma \in (\ZZ_r \wr \fS_n)^- \atop S_{\sigma}=\emptyset} 	z^{\des (\sigma)}.
\end{align*}

In order to prove this lemma, we proceed to describe a way to construct all the  colored permutations from permutations with $S_\sigma=\emptyset$.
The construction is as follows, which is similar to that in the proof of~\cite[Theorem~4.6]{Gustafsson2018Derangements}.

\begin{enumerate}
	\item Take an element $\sigma = \pi_1^{c_1}\pi_2^{c_2}...\pi_{k}^{c_{k}} \in \Z_r\wr\mathfrak{S}_{k}$ with $S_\sigma=\emptyset$ and choose a subset $T$ of $[n]$ with cardinality $n-k$.
	\item Replace each element $\pi_i = j$ with the $j${th} smallest element of $[n]\setminus T$. 
	\item We will now insert the elements in $T$ into the permutation obtained in the previous step, in such a way which make them bad. Pick each element $i\in T$ in the relative order from the smallest to the largest. If $i = 1$, insert $i$ at the front of $\sigma$ and give it color $0$. Otherwise, find the rightmost element $\pi_j$ such that $\pi_j < i$ and $\pi_j < \pi_k$ for every $k > j$. Give $i$ the same color as $\pi_j$ and insert it right after $\pi_j$.
\end{enumerate}

Such a construction does not affect the descent number and  makes the set $T$ correspond to the set of bad elements $S_\sigma$.  Hence, the desired combinatorial identities of $\widetilde{A}^+_{n,r}(z)$ and $\widetilde{A}^-_{n,r}(z)$ follow immediately  from their formal definitions in~\eqref{eq:wA+} and  \eqref{eq:wA-}, which completes the proof.
\end{proof}

As we now describe, the statistics on $\ZZ_r \wr \fS_n$ are related to statistics on $\mathbf{s}$-inversion sequences $\mathcal{I}_n^\mathbf{s}$ with $\mathbf{s}= (rn,\ldots, 2r,r)$.

\begin{lem}
For positive integers $n$ and $r$, we have that 
	\begin{align*}
		\widetilde{A}^+_{n,r}(z) \ = \ p_{n,0}^\mathbf{s}(z)  
		\quad \mbox{ and } \quad
		\widetilde{A}^-_{n,r}(z) \  =  \ \sum_{k=1}^{r-1} p_{n,k}^{\mathbf{s}}(z),
	\end{align*}
where	$\mathbf{s}= (rn,\ldots, 2r,r)$.
	In particular, $\widetilde{A}^+_{n,r}(z) = \widetilde{E}_{n-1}^{(rn,\ldots, 2r)}(z)$.
\end{lem}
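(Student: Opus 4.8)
The plan is to establish the two claimed identities by exhibiting an explicit bijection between colored permutations and $\mathbf{s}$-inversion sequences for $\mathbf{s} = (rn, r(n-1), \ldots, 2r, r)$ that carries $\des$ to $\asc$ and $\bad$ to the collision statistic (counting only indices in $[0,n-1]$ for the primed version, and the full range for the unprimed version). First I would recall the classical bijection $\Theta$ from the permutation subsection: a colored permutation $\pi^c \in \ZZ_r \wr \fS_n$ should be encoded by reading off, at each position, both the number of smaller entries appearing later (the Lehmer-type inversion count, contributing a value in $\{0, 1, \ldots, n-i\}$) and the color $c_i \in \{0, 1, \ldots, r-1\}$, and combining these into a single integer $e$ with $0 \le e < r(n-i+1)$; reversing the order of positions as in the permutation case then lands us in $\mathcal{I}_n^{\mathbf{s}}$ with $\mathbf{s} = (rn, \ldots, 2r, r)$. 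The precise recipe for merging the inversion count $t$ at position $j$ and the color $c$ into a residue should be chosen so that the descent condition (either $c_i > c_{i+1}$, or $c_i = c_{i+1}$ and $\pi_i > \pi_{i+1}$) translates exactly into the ascent condition $\frac{e_i}{s_i} < \frac{e_{i+1}}{s_{i+1}}$ on the inversion sequence — this is the key design constraint and mirrors how Gustafsson--Solus set things up for $E_n^{\mathbf{s}}$ and $d_n^{\mathbf{s}}$.

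The main steps, in order, are: (i) write down the map $\ZZ_r \wr \fS_n \to \mathcal{I}_n^{\mathbf{s}}$ explicitly and check it is a bijection by a dimension/cardinality count ($|\ZZ_r \wr \fS_n| = r^n n! = \prod_{i=1}^n r i = |\mathcal{I}_n^{\mathbf{s}}|$) together with an inverse algorithm; (ii) verify $\des(\pi^c) = \asc(\mathbf{e})$ position by position, handling the boundary convention $\pi_{n+1} = n+1$, $c_{n+1} = 0$ against $e_{n+1} = 0$, $s_{n+1} = 1$; (iii) verify that $i$ is bad with respect to $\pi^c$ if and only if the corresponding index is a collision in $\mathbf{e}$ — here the three-part badness condition (entry is a right-to-left minimum, its left neighbor is also a right-to-left minimum, and the two share a color) must collapse to the numerical equality $\frac{e_i}{s_i} = \frac{e_{i+1}}{s_{i+1}}$, just as in the permutation case $t_{j-1} = t_j = 0$ forced $\pi_j$ bad; (iv) track the last coordinate: $\pi^c \in (\ZZ_r \wr \fS_n)^+$ means $c_n = 0$, which under the encoding should correspond to $e_n \in \{0\}$ after suitable normalization, i.e. $e_n = 0$, so that summing over $(\ZZ_r \wr \fS_n)^+$ matches $p_{n,0}^{\mathbf{s}}(z)$ and summing over $(\ZZ_r \wr \fS_n)^-$ (colors $c_n \in \{1, \ldots, r-1\}$) matches $\sum_{k=1}^{r-1} p_{n,k}^{\mathbf{s}}(z)$, using $\col(\mathbf{e}) = \col'(\mathbf{e}) + \chi(e_n = 0)$ to reconcile the $\bad$ exponent on the two sides; (v) deduce the last assertion $\widetilde{A}^+_{n,r}(z) = \widetilde{E}_{n-1}^{(rn, \ldots, 2r)}(z)$ by noting that $p_{n,0}^{\mathbf{s}}(z)$ with the $e_n = 0$ constraint and $s_n = r$ effectively drops the last coordinate, so the sum runs over $\mathcal{I}_{n-1}^{(rn, \ldots, 2r)}$ with the correct collision count (the extra collision at the right end, $\frac{e_{n-1}}{s_{n-1}} = \frac{0}{r} = 0$ exactly when $e_{n-1} = 0$, is precisely the $\chi(e_{n-1}=0)$ term that upgrades $\col'$ to $\col$).

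The hard part will be step (iii): pinning down exactly which residue-merging convention makes the badness condition equivalent to a collision, and then proving that equivalence cleanly. Unlike the pure-permutation case, badness now also involves color agreement between adjacent entries, so I expect the encoding must be arranged so that a collision $\frac{e_i}{s_i} = \frac{e_{i+1}}{s_{i+1}}$ forces both the color-equality and the right-to-left-minimum conditions simultaneously; verifying this will require a careful case analysis depending on whether $\pi_i = i$ or the colors are zero, analogous to the delicate bookkeeping in \cite[Theorem~4.6]{Gustafsson2018Derangements} and \cite[Lemma~4.3.9]{Gustafsson2018Box}. A secondary subtlety is making sure the boundary conventions at both ends of the inversion sequence ($s_0 = s_{n+1} = 1$, $e_0 = e_{n+1} = 0$) line up with the colored-permutation conventions ($\pi_0 = 0$, $c_0 = 0$, $\pi_{n+1} = n+1$, $c_{n+1} = 0$), particularly since the last coordinate plays a distinguished role in separating the $+$ and $-$ parts. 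Once the encoding is fixed, steps (i), (ii), (iv), and (v) should be routine verifications.
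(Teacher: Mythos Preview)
Your overall strategy matches the paper's: construct an explicit bijection from $\ZZ_r \wr \fS_n$ to $\mathcal{I}_n^{\mathbf{s}}$ with $\mathbf{s} = (rn, \ldots, 2r, r)$ and verify that it carries $(\des, \bad, \chi(c_n=0))$ to $(\asc, \col', \chi(e_n=0))$. The paper's encoding is exactly what you sketch, namely $\Psi(\pi^c)_i = (n-i+1)c_i + t_i$ where $t_i$ is the Lehmer entry at position $i$. Note, however, that no reversal is wanted here: position $i$ already has range $[0, r(n-i+1))$, so the natural order gives $\mathbf{s} = (rn, \ldots, r)$ directly; reversing would produce $(r, 2r, \ldots, rn)$ instead and would send the last-color information to the \emph{first} coordinate, breaking your step (iv).

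There is a genuine gap in your step (ii). Under the natural encoding $\Psi$ one obtains $\des(\pi^c) = \des(\mathbf{e})$, not $\asc(\mathbf{e})$: a descent of $\pi^c$ at $i$ (either $c_i > c_{i+1}$, or $c_i = c_{i+1}$ and $\pi_i > \pi_{i+1}$) corresponds to $e_i/s_i > e_{i+1}/s_{i+1}$. No single merging convention will flip this inequality while simultaneously keeping the last-coordinate condition $c_n = 0 \leftrightarrow e_n = 0$ intact. The paper resolves this with an extra step you do not mention: after establishing $\des(\pi^c) = \des(\mathbf{e})$ and $\bad(\pi^c) = \col'(\mathbf{e})$ under $\Psi$, it applies the involution $f: \mathcal{I}_n^{\mathbf{s}} \to \mathcal{I}_n^{\mathbf{s}}$ given by $f(\mathbf{e})_i = -e_i \bmod s_i$ (from \cite[Theorem~3.1]{Gustafsson2018Derangements}), which swaps $\asc$ and $\des$, preserves $\col$ and hence $\col'$ (since $\asc + \col + \des \equiv n$), and fixes the condition $e_n = 0$. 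Once you insert this involution, your plan goes through; in particular step (iii), which you flag as the hard part, is in fact the one-line observation that a collision forces $c_{j-1} = c_j$ and $t_{j-1} = t_j = 0$, exactly the three badness conditions, just as in the uncolored case.
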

\begin{proof}
Let $\mathbf{s}= (rn,\ldots, 2r,r)$.
We define $\Psi: \Z_{r}\wr\mathfrak{S}_n\longrightarrow \mathcal{I}_n^{\mathbf{s}}$ where
\[
\Psi: \pi_1^{c_1}\cdots\pi_n^{c_n} \longmapsto (nc_1+t_1, \ldots,2c_{n-1}+t_{n-1},c_n+t_n).
\]
The inverse mapping $\Psi^{-1}$ is given by 
\[
\Psi^{-1}: (e_1,\ldots,e_n)\longmapsto \pi_1^{c_1}\cdots\pi_n^{c_n},
\]
where $\pi_1\cdots\pi_n$ is the permutation with inversion sequence
\[
t = (e_{1}-nc_1,\ldots,e_{n-1}-2c_{n-1}, e_n-c_n),
\]
and 
$
c_i = \left\lfloor\frac{e_{i}}{n-i+1}\right\rfloor
$ 
for each $i\in[n]$.
Note that $t_n=0$ and hence we have that
\[
\Psi^{-1}(\{\mathbf{e} \in \mathcal{I}_n^{\mathbf{s}}   :   e_n >0  \}) = \{\sigma \in \Z_{r}\wr\mathfrak{S}_n \, : \,   c_n > 0 \},
\]
and
\[
\Psi^{-1}(\{\mathbf{e} \in \mathcal{I}_n^{\mathbf{s}}   :   e_n = 0  \})    = \{\sigma \in \Z_{r}\wr\mathfrak{S}_n \, : \,   c_n = 0 \}.
\]
It is clear to see that $\des (\pi^c) = \des (\mathbf{e})$. 
We shall prove that $\bad (\pi^c) = \col' (\mathbf{e})$. 
For convenience, we let $t_0=0$.
For $1\le j \le n$, if $\frac{e_{j-1}}{(n-j+1)r} = \frac{e_{j}}{(n-j)r}$ then it must be that $c_{j-1}= c_j$ and $t_{j-1}= t_j=0$, which implies that $\pi_j$ is bad. The converse statement is true as well.
Hence we get that 
\begin{align*}
\widetilde{A}^+_{n,r}(z) & =   	\sum_{\mathbf{e} \in \mathcal{I}_n^\mathbf{s}}\chi(e_n = 0)  (1+z)^{\col' (\mathbf{e})}  z^{\des(\mathbf{e})}, \\[6pt]
\widetilde{A}^-_{n,r}(z)  & =   \sum_{\mathbf{e} \in \mathcal{I}_n^{\mathbf{s}}}\chi(e_n > 0)  (1+z)^{\col' (\mathbf{e})}  z^{\des(\mathbf{e})}.
\end{align*}
Let $f : \mathcal{I}_n^\mathbf{s} \rightarrow \mathcal{I}_n^\mathbf{s}$ be the involution defined by 
$
f(\mathbf{e})_i = -e_i \text{ mod } s_i.
$
It was shown in~\cite[Theorem~3.1]{Gustafsson2018Derangements} that $\asc (\mathbf{e}) = \des (f( \mathbf{e}))$ and $\des(\mathbf{e}) = \asc (f(\mathbf{e}))$. Since $\asc (\mathbf{e}) + \col  (\mathbf{e}) + \des(\mathbf{e}) =n$ for any $\mathbf{e}  \in \mathcal{I}_n^\mathbf{s}$,
it follows that $\col  (\mathbf{e})  = \col  (f(\mathbf{e}))$ and thus $\col' (\mathbf{e})  = \col' (f(\mathbf{e}))$ 
Therefore, we get that
	\begin{align*}
	\widetilde{A}^+_{n,r}(z) & =   	\sum_{\mathbf{e} \in \mathcal{I}_n^\mathbf{s}}\chi(e_n = 0)  (1+z)^{\col' (\mathbf{e})}  z^{\asc(\mathbf{e})}, \\[6pt]
	\widetilde{A}^-_{n,r}(z)  & =   \sum_{\mathbf{e} \in \mathcal{I}_n^{\mathbf{s}}}\chi(e_n > 0)  (1+z)^{\col' (\mathbf{e})}  z^{\asc(\mathbf{e})}.
\end{align*}
This completes the proof.
\end{proof}
Now we are in the position to prove Theorem~\ref{thm-colored}.

\begin{proof}[Proof of Theorem~\ref{thm-colored}]
Let $\mathbf{s}= (rn,\ldots, 2r,r)$.
By Theorem~\ref{thm:s}, the sequence $\left( p_{n,k}^\mathbf{s}(z) \right)_{k=0}^{r-1}$ is interlacing. 
By Lemma~\ref{lem:convex}, we get that 
$$\widetilde{A}^+_{n,r}(z)\ =\ p_{n,0}^\mathbf{s}(z) \ \ll \ \ \sum_{k=1}^{r-1} p_{n,k}^{\mathbf{s}}(z) = \widetilde{A}^-_{n,r}(z).$$
Since $\widetilde{A}_{n,r}(z) =  	\widetilde{A}^+_{n,r}(z) +  	\widetilde{A}^-_{n,r}(z)$, the real-rootedness of $\widetilde{A}_{n,r}(z)$ follows immediately. This completes the proof.
\end{proof}

\subsection{The  Edgewise Subdivision}
\label{subsec: edgewise subdivision}

Athanasiadis~\cite{Athanasiadis2012Flag} considered a triangulation of a sphere from  a triangulation of a simplex.
Let $V = \{v_1, v_2,\dots,v_n\}$ be an $n$-element 
set and let $\Gamma$ be a triangulation of the simplex 
$2^V$. Define $U = \{u_1, 
u_2,\dots,u_n\}$ to a new $n$-element set.  
Denote by  $\Delta(\Gamma)$ the 
sets of  the form $E \cup G$, where 
$E = \{ u_i: i \in I\}$ is a face of the simplex 
$2^U$ for some $I \subseteq [n]$ and $G$ is a face 
of the restriction $\Gamma_F$ of $\Gamma$ to the 
face $F = \{ v_i: i \in [n] \backslash I\}$ of the simplex 
$2^V$. 
Athanasiadis~\cite{Athanasiadis2012Flag, Athanasiadis2018Binomial} showed that the  $h$-polynomial of $\Delta(\Gamma)$ satisfies
\begin{align*}
h(\Delta(\Gamma), z) \ = \ \sum_{F \subseteq V}  t^{n-|F|} \, h(\Gamma_F, z),
\end{align*}
and
\begin{align*}
h(\Delta(\Gamma), z) \ = \ \sum_{F \subseteq V}  (1+z)^{n-|F|}  \ell_F (\Gamma_F, z),
\end{align*} 
where $ \ell_F(\, \cdot\, , z)$ is the local $h$-polynomial introduced by Stanley~\cite{Stanley1992Subdivisions}.
It is natural to ask  how $h$-polynomials $h(\Delta(\Gamma),z)$ relate as $\widetilde{E}_n^\mathbf{s}(z)$ for certain integer sequences. 
In this subsection, we provide a new class of such  $h$-polynomials when the $\Gamma$ are edgewise subdivisions of simplexes.

The edgewise subdivision is a well-studied subdivision of a simplicial complex that arises in a variety of mathematical contexts, see \cite{Beck2010log, Brenti2009Veronese, Brun2005Subdivisions, Eisenbud1994Initial, Jochemko2018Combinatorial, Kubitzke2012Enumerative}.
One of its properties is that its faces $F$  are divided into $r^{\dim(F)}$  faces of the same dimension.
Athanasiadis \cite{Athanasiadis2014Edgewise, Athanasiadis2016local} showed that
\begin{align}\label{eq:localedge}
\ell_V \left((2^V)^{\langle r \rangle}, x \right)=
\ {\rm E}_r \left(\, (x + x^2 + \cdots + x^{r-1})^n \right),
\end{align}
where ${\rm E}_r$ is a linear operator defined on polynomials by setting ${\rm E}_r (x^n) = x^{n/r}$, if $r$ divides $n$,
and ${\rm E}_r (x^n) = 0$ otherwise.
\begin{lem}
		For positive integers $r$ and $n$,  
\begin{align}\label{eq-h-esd}
	h \left(\Delta \left( \esd_r \left( 2^{[n]}\right) \right), z  \right)  = {\rm E}_r \left(  (1 + z + z^2 + \cdots + z^{r})^n \right).
\end{align}
\end{lem}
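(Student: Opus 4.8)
The plan is to compute both sides of~\eqref{eq-h-esd} against the known formula
\[
h(\Delta(\Gamma), z) \ = \ \sum_{F \subseteq V}  (1+z)^{n-|F|}  \, \ell_F (\Gamma_F, z),
\]
taking $\Gamma = \esd_r(2^{[n]})$ and $V = [n]$. First I would observe that for each $F \subseteq [n]$ the restriction $\Gamma_F$ is exactly the edgewise subdivision $(2^F)^{\langle r \rangle}$ of the face $2^F$, so by~\eqref{eq:localedge} the local $h$-polynomial is
\[
\ell_F(\Gamma_F, z) = {\rm E}_r\!\left( (z + z^2 + \cdots + z^{r-1})^{|F|} \right).
\]
Substituting this into the sum over faces gives
\[
h\!\left(\Delta\!\left(\esd_r(2^{[n]})\right), z\right)
= \sum_{F \subseteq [n]} (1+z)^{n-|F|} \, {\rm E}_r\!\left( (z + \cdots + z^{r-1})^{|F|} \right)
= \sum_{j=0}^{n} \binom{n}{j} (1+z)^{n-j} \, {\rm E}_r\!\left( (z + \cdots + z^{r-1})^{j} \right).
\]

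Next I would push the operator ${\rm E}_r$ outside the entire sum. Since ${\rm E}_r$ is linear and annihilates every monomial whose exponent is not divisible by $r$, while $(1+z)^{n-j}$ contributes only nonnegative integer powers, the cleanest route is to write $(1+z)^{n-j}$ itself inside an ${\rm E}_r$ only after a reindexing; more directly, I would use that ${\rm E}_r\big(z^a\,g(z)\big)$ relates to ${\rm E}_r(g)$ only when $r \mid a$, so instead I would bring the binomial factor in by noting
\[
{\rm E}_r\!\left( (1+z)^{n-j}(z+\cdots+z^{r-1})^{j} \cdot (\text{correction}) \right)
\]
is not literally what we have. The correct manipulation: apply ${\rm E}_r$ to the full expansion $(1 + z + z^2 + \cdots + z^r)^n$ and expand $1 + z + \cdots + z^r = (1+z) + (z + z^2 + \cdots + z^{r-1})$; wait, that is not an identity. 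The right split is $1 + z + \cdots + z^r = (z + z^2 + \cdots + z^{r-1}) + (1 + z^r)$, and $1 + z^r = {\rm E}_r^{-1}$-friendly since ${\rm E}_r(z^{r}) = z$ and ${\rm E}_r(1) = 1$, i.e. ${\rm E}_r(1 + z^r) = 1 + z = {\rm E}_r$ applied termwise. Using the binomial theorem on $\big((z+\cdots+z^{r-1}) + (1+z^r)\big)^n$ and applying ${\rm E}_r$ term by term,
\[
{\rm E}_r\!\left( (1+z+\cdots+z^r)^n \right)
= \sum_{j=0}^{n} \binom{n}{j} {\rm E}_r\!\left( (z+\cdots+z^{r-1})^{j} (1+z^r)^{n-j} \right).
\]
Here each factor $(z + \cdots + z^{r-1})^j$ is a polynomial all of whose exponents lie strictly between multiples of $r$ in a controlled way, while $(1+z^r)^{n-j}$ has exponents that are all multiples of $r$; the key elementary fact is that ${\rm E}_r\big(p(z)\, q(z^r)\big) = {\rm E}_r(p(z))\, q(z)$ for any polynomials $p, q$. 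Applying this with $p = (z+\cdots+z^{r-1})^j$ and $q(w) = (1+w)^{n-j}$ yields exactly $\sum_j \binom{n}{j}(1+z)^{n-j}\,{\rm E}_r\big((z+\cdots+z^{r-1})^j\big)$, matching the face-sum expression above.

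\textbf{Main obstacle.} The only genuine content is the multiplicativity-type identity ${\rm E}_r\big(p(z)\, q(z^r)\big) = {\rm E}_r(p(z))\, q(z)$, which follows directly from linearity of ${\rm E}_r$ and the computation on monomials $z^a z^{rb} = z^{a + rb}$ (noting $r \mid a+rb \iff r \mid a$), together with the identification $\esd_r(2^{[n]})_F = (2^F)^{\langle r \rangle}$ and~\eqref{eq:localedge}. Everything else is bookkeeping with the binomial theorem, so I expect the write-up to be short; the point requiring care is simply making sure the split $1 + z + \cdots + z^r = (z + z^2 + \cdots + z^{r-1}) + (1 + z^r)$ is used consistently and that the restriction of the edgewise subdivision to a face really is again an edgewise subdivision of that face, which is standard.
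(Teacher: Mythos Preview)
Your proposal is correct and follows essentially the same route as the paper: expand $h(\Delta(\Gamma),z)$ via the local $h$-polynomial formula, use that the edgewise subdivision restricts to the edgewise subdivision on each face together with~\eqref{eq:localedge}, and then identify the resulting sum with ${\rm E}_r\big((1+z+\cdots+z^r)^n\big)$ via the split $1+z+\cdots+z^r = (z+\cdots+z^{r-1})+(1+z^r)$ and the rule ${\rm E}_r\big(p(z)\,q(z^r)\big) = {\rm E}_r(p(z))\,q(z)$. The paper presents exactly this computation, only more tersely and without the mid-course self-correction; when you write it up, just commit directly to the correct decomposition.
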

\begin{proof}	
It is known that \cite{Athanasiadis2016local}, for any simplicial complex $\Delta$, the $r$-fold edgewise subdivision $\esd_r(\Delta)$ restricts to $\esd_r(2^F)$ for every $F\in \Delta$.
Hence, it follows that 
\begin{align}\label{eq:h-esd}
h \left(\Delta \left( \esd_r \left(2^{[n]}\right) \right), z  \right)  = \sum_{k=0}^{n} \binom{n}{k} (1+z)^{n-k} \, \ell \left(\esd_r(2^{[k]}) , z\right).
\end{align}
Athanasiadis~\cite{Athanasiadis2016local} also proved that  the local $h$-polynomial of the $r$th edgewise subdivision of a simplex is 
\[
\ell \left(\esd_r(2^{[k]}) , z\right) 
\, =\, {\rm E}_r \left(  ( z + z^2 + \cdots + z^{r-1})^k \right).
\]
Therefore, we have that
\begin{align*}
h \left(\Delta \left( \esd_r \left( 2^{[n]}\right) \right), z  \right)  &  = \sum_{k=0}^{n} \binom{n}{k} (1+z)^{n-k} \, {\rm E}_r \left( ( z + z^2 + \cdots + z^{r-1})^k \right)  \\[6pt]
&  ={\rm E}_r \left(  \sum_{k=0}^{n} \binom{n}{k}  \,  (1+z^r)^{n-k} (z + z^2 + \cdots + z^{r-1})^k \right)  \\[6pt]
&  ={\rm E}_r \left(  (1 + z + z^2 + \cdots + z^{r})^n \right).
\end{align*}
This completes the proof.
\end{proof}


Let $\mathcal{W}(n,r)$ denote the set of words $w = w_0 w_1\cdots w_n$ where $w_i\in[0,r-1]$  for all $1\le i \le n-1$, with the assumption $w_0 = w_n = 0$.  
Given a  word $w \in \mathcal{W}(n,r)$, an index $i\in[0,n]$ is said to be
an  ascent if $w_i<w_{i+1}$, and 
a  collision if $w_i=w_{i+1}$, and 
we let $\asc (w)$ and $\col (w)$ denote the number of ascents and collisions in $w$, respectively. 
Let $\SW(n,r)$ denote the subset of words in $\mathcal{W}(n,r)$ with no collisions.
The words in  $\SW(n,r)$ are called Smirnov words, see \cite{Athanasiadis2017survey}.
\begin{thm}For a positive integer $r$ and $n$,  we have 
	\begin{align}\label{eq-h-w}
	h \left(\Delta \left( \esd_r \left( 2^{[n]}\right) \right), z  \right) = 
	\sum_{w \in \mathcal{W}(n+1,r)} (1+z)^{\col(w)} z^{\asc (w)}.
	\end{align}
	Moreover, this polynomial has only real roots.
\end{thm}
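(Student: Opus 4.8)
The strategy is to identify the right-hand side of~\eqref{eq-h-w} with a binomial Eulerian polynomial $\widetilde{E}_n^\mathbf{s}(z)$ for a suitable constant sequence, and then invoke Theorem~\ref{thm:s} for the real-rootedness, while using the previous lemma's formula~\eqref{eq-h-esd} together with an elementary identity to establish~\eqref{eq-h-w} itself. First I would observe that a word $w = w_0 w_1 \cdots w_{n+1} \in \mathcal{W}(n+1,r)$ with $w_0 = w_{n+1} = 0$ and $w_i \in [0,r-1]$ for $1 \le i \le n$ is literally the same object as an $\mathbf{s}$-inversion sequence $(w_1,\ldots,w_n) \in \mathcal{I}_n^\mathbf{s}$ for the \emph{constant} sequence $\mathbf{s} = (r,r,\ldots,r)$ ($n$ entries), since the boundary conventions $e_0 = e_{n+1} = 0$, $s_0 = s_{n+1} = 1$ match $w_0 = w_{n+1} = 0$, and with all $s_i = r$ the condition $\frac{e_i}{s_i} < \frac{e_{i+1}}{s_{i+1}}$ reduces to $w_i < w_{i+1}$, and similarly for collisions. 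Hence the ascent and collision statistics on words agree with $\asc$ and $\col$ on $\mathcal{I}_n^\mathbf{s}$, so the right-hand side of~\eqref{eq-h-w} is exactly $\widetilde{E}_n^{(r,r,\ldots,r)}(z)$ by definition~\eqref{eq:wE}. Theorem~\ref{thm:s} then immediately gives that this polynomial has only real roots, settling the ``moreover'' clause.

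It remains to prove the identity~\eqref{eq-h-w}, i.e.\ that $\widetilde{E}_n^{(r,\ldots,r)}(z) = {\rm E}_r\left((1 + z + \cdots + z^r)^n\right)$. The plan here is a direct generating-function computation: expand $(1 + z + z^2 + \cdots + z^r)^{n}$ combinatorially. One clean way is to separate the ``$1$'' and ``$z^r$'' contributions from the middle terms $z + z^2 + \cdots + z^{r-1}$ exactly as in the proof of the preceding lemma, choosing which of the $n$ factors contribute an endpoint-type term; but it is more transparent to track things word-by-word. I would instead argue that $\widetilde{E}_n^{(r,\ldots,r)}(z)$ already has a known product-type evaluation via the transfer-matrix / Smirnov-word decomposition: grouping a word $w \in \mathcal{W}(n+1,r)$ according to its maximal constant runs, a run of length $\ell$ contributes a factor encoding $(1+z)^{\ell-1}$ for the internal collisions and a $z$ or $1$ at its boundary, and the sum over all such patterns telescopes. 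Concretely, the factor $1+z+z^2+\cdots+z^r$ in~\eqref{eq-h-esd}, when one applies ${\rm E}_r$, precisely records for each position a choice among: continue a collision (contributing $1+z$ spread appropriately), start an ascent, or close off at the top — matching the recursive structure~\eqref{eqn: recursion} with all $t_{m,k}$ and $s_m$ specialized to the constant case. I would verify the resulting recurrence for ${\rm E}_r\left((1+z+\cdots+z^r)^n\right)$ agrees with that of $p_{n,k}^{(r,\ldots,r)}(z)$ summed appropriately, or alternatively give the bijective run-decomposition argument in full.

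The main obstacle is the bookkeeping in proving~\eqref{eq-h-w}: one must correctly match the operator ${\rm E}_r$ (which kills all monomials whose exponent is not a multiple of $r$ and divides the rest by $r$) against the combinatorial statistic $(1+z)^{\col}z^{\asc}$ summed over words of length $n+1$. The cleanest route is probably to mimic the lemma's proof: write $(1+z+\cdots+z^r)^n = \sum_k \binom{n}{k}(1+z^r)^{n-k}(z+z^2+\cdots+z^{r-1})^k$, apply ${\rm E}_r$, and then invoke~\eqref{eq:localedge} identifying ${\rm E}_r\left((z+\cdots+z^{r-1})^k\right)$ as a sum over Smirnov words in $\SW(k+1,r)$ weighted by $z^{\asc}$ — this is essentially the $\col = 0$ specialization — and finally reconstitute the $(1+z)^{n-k}$ factor as the choice of which $n-k$ letters extend collision runs, exactly the bijection between $\mathcal{W}(n+1,r)$ and pairs (Smirnov skeleton of length $k+1$, insertion of $n-k$ collision letters). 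I expect this last bijection — checking that inserting a repeated letter raises $\col$ by one and leaves $\asc$ unchanged, and that every word arises uniquely — to be the delicate but ultimately routine heart of the argument; once it is in place, real-rootedness follows for free from the identification with $\widetilde{E}_n^{(r,\ldots,r)}(z)$ and Theorem~\ref{thm:s}.
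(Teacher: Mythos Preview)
Your proposal is correct and follows essentially the same route as the paper: identify $\mathcal{W}(n+1,r)$ with $\mathcal{I}_n^{(r,\ldots,r)}$ so that the right-hand side is $\widetilde{E}_n^{(r,\ldots,r)}(z)$ and real-rootedness comes from Theorem~\ref{thm:s}; for the identity itself, decompose each word by its Smirnov skeleton plus the positions of the repeated (collision) letters, yielding $\sum_k \binom{n}{k}(1+z)^{n-k}\sum_{w\in\SW(k+1,r)}z^{\asc(w)}$, and then invoke the known interpretation of the local $h$-polynomial of $\esd_r(2^{[k]})$ as this Smirnov sum together with~\eqref{eq:h-esd}. The paper dispatches the bijective step with ``it is not hard to see,'' so your more careful plan for that bookkeeping is, if anything, more thorough.
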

\begin{proof}
	It is not hard to see that 
	\begin{align*}
	\sum_{w \in \mathcal{W}(n+1,r)} (1+z)^{\col(w)} z^{\asc (w)}   \, =\, \sum_{k=0}^{n} \binom{n}{k} (1+z)^{n-k}  \sum_{w\in \mathcal{SW}(k+1,r)}  z^{\asc (w)}.
	\end{align*}
Since it is known~\cite{Athanasiadis2016local} that 
	\begin{align*}
	\ell \left(\esd_r(2^{[k]}) , z\right) = 	 \sum_{w\in \mathcal{SW}(n+1,r)}  z^{\asc (w)},
	\end{align*}
the identity~\eqref{eq-h-w} follows from~\eqref{eq:h-esd}.

	Let $\mathbf{s} = (r,r,\ldots,r)$.
From the definition of $\mathcal{W}(n+1,r)$ and that of $\mathcal{I}_n^{\mathbf{s}}$, we know that 
\[
\widetilde{E}_n^{\mathbf{s}}(z) = \sum_{w\in \mathcal{W}(n+1,r)} (1+z)^{\col(w)} z^{\asc (w)}.
\]
Thus, their real-rootedness  follows  immediately from Theorem~\ref{thm-perm}. 
\end{proof}

Before ending this subsection, we shall present an alternative way to prove the real-rootedness of $h \left(\Delta \left( \esd_r \left( 2^{[n]}\right) \right), z  \right)$.
Given a polynomial $f(z)$, there exist uniquely determined polynomials  $f^{\langle r,0 \rangle}(z)$, $f^{\langle r,1 \rangle}(z)$, $ \ldots,$  $f^{\langle r,r-1 \rangle}(z)$ such that
\begin{align*}
f(z) = f^{\langle r,0 \rangle}(z^r) + z f^{\langle r,1 \rangle}(z^r)+ \cdots +z^{r-1}f^{\langle r,r-1 \rangle}(z^r).
\end{align*}
The alternative proof is based on \eqref{eq-h-esd} and the following lemma.
\begin{lem}\label{lem-f}
	Let $r$ be a  positive integer.
	Suppose that $f(z)$ and
	$g(z)$ are two polynomial  with nonnegative coefficients  satisfying
	\begin{align}
	\left(1+z+\dots+z^{r}\right) f(z)  = g(z). \label{eq:fg}
	\end{align}
	If the sequence $\left(f^{\langle r, r-1 \rangle}(z), \ldots, f^{\langle  r,1 \rangle}(z), f^{\langle r,0 \rangle}(z)\right)$ is  interlacing, then so is $\big( g^{\langle r, r-1 \rangle}(z)$, $\ldots, g^{\langle  r,1 \rangle}(z), g^{\langle r,0 \rangle}(z) \big)$.
\end{lem}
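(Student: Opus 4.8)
The plan is to convert the identity \eqref{eq:fg} into a linear recurrence among the components $f^{\langle r,j\rangle}$ and $g^{\langle r,j\rangle}$, and then recognize that recurrence as an instance of Theorem~\ref{thm: interlacing recursion} (or at least of Lemma~\ref{pb}). First I would expand both sides of $(1+z+\dots+z^r)f(z)=g(z)$ and extract, for each residue class $j\in[0,r-1]$, the subseries consisting of those monomials whose exponent is congruent to $j$ modulo $r$. Writing $z=z$ and collecting powers, the factor $1+z+\dots+z^r=(1+z^r)+(z+z^2+\dots+z^{r-1})$ splits into a part supported on exponents $\equiv 0\pmod r$ and a part supported on exponents $\equiv 1,\dots,r-1\pmod r$. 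Carrying out the extraction of the $z^j$-component for $0\le j\le r-1$ should yield, after dividing by the appropriate power of $z$ and substituting $z^r\mapsto z$, an expression of the shape
\begin{align*}
g^{\langle r,j\rangle}(z) \;=\; z\sum_{i=0}^{j-1} f^{\langle r,i\rangle}(z) \;+\; (1+z)\, f^{\langle r,j\rangle}(z)\;+\;\sum_{i=j+1}^{r-1} f^{\langle r,i\rangle}(z),
\end{align*}
so that in the reversed indexing $\big(f^{\langle r,r-1\rangle},\dots,f^{\langle r,0\rangle}\big)\mapsto\big(g^{\langle r,r-1\rangle},\dots,g^{\langle r,0\rangle}\big)$ the transformation matrix has exactly the triangular form with diagonal entries $1+z$, entries $z$ below, and entries $1$ above — i.e.\ precisely the matrix produced by \eqref{eq-f-g} with every $a_i=1+z$ and $t_i$ strictly increasing. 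The precise bookkeeping of which residue class picks up the $1+z$, which picks up $z$, and which picks up $1$ is a routine but slightly fiddly computation; I would do it carefully once and state the resulting matrix.

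Once the recurrence is in that matrix form, the conclusion is immediate: by Theorem~\ref{thm: interlacing recursion} (applied with $p=q=r$, $t_i=i$ after reindexing, and all $a_i=1+z$, which satisfies the compatibility condition vacuously since the $t_i$ are distinct), the map sends interlacing sequences with nonnegative coefficients to interlacing sequences. Thus, assuming $\big(f^{\langle r,r-1\rangle}(z),\dots,f^{\langle r,0\rangle}(z)\big)$ is interlacing with nonnegative coefficients — which is the hypothesis of the lemma, noting that nonnegativity of the components follows from nonnegativity of $f$ — we conclude $\big(g^{\langle r,r-1\rangle}(z),\dots,g^{\langle r,0\rangle}(z)\big)$ is interlacing as well.

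The main obstacle I anticipate is getting the index arithmetic exactly right when extracting the residue-class components, particularly at the boundary: the term $z^r$ from the factor contributes to the $\equiv 0$ class but at a shifted level, so it is what produces the ``$+1$'' in the $1+z$ on the diagonal for $j=0$, while for $j\ge 1$ the diagonal $1+z$ comes from pairing the $z^0$ and $z^j$ contributions of the factor against the $j$th component of $f$. One must also check that no component of $g$ acquires an entry $1+z$ in an off-diagonal position or a forbidden $2\times 2$ pattern — but since the matrix is literally the one in Theorem~\ref{thm: interlacing recursion}, this is automatic. A minor secondary point is to record that all $g^{\langle r,j\rangle}$ and $f^{\langle r,j\rangle}$ do have nonnegative coefficients, so that Lemma~\ref{pb} applies; this is clear from \eqref{eq:fg} and the nonnegativity assumptions.
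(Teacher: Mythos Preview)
Your approach is exactly the paper's: write the relation \eqref{eq:fg} as a matrix identity and invoke Theorem~\ref{thm: interlacing recursion} with $p=q=r$, $t_i=i$, and all $a_i=1+z$. The paper states this matrix directly as \eqref{eq-m} and applies Theorem~\ref{thm: interlacing recursion} without further comment.

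One small correction: your displayed intermediate formula has the coefficients $z$ and $1$ swapped. The correct extraction gives
\[
g^{\langle r,j\rangle}(z)\;=\;\sum_{i=0}^{j-1} f^{\langle r,i\rangle}(z)\;+\;(1+z)\,f^{\langle r,j\rangle}(z)\;+\;z\sum_{i=j+1}^{r-1} f^{\langle r,i\rangle}(z),
\]
because when $i>j$ the unique exponent in $[i,i+r]$ congruent to $j$ modulo $r$ is $j+r$ (contributing a factor $z$), while when $i<j$ it is $j$ itself (contributing $1$). After reversing the indexing this is precisely the matrix you describe, with $1+z$ on the diagonal, $z$ below, and $1$ above, so your conclusion via Theorem~\ref{thm: interlacing recursion} stands. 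You anticipated exactly this kind of slip; just fix the display and the argument is complete.
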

\begin{proof}
The identity~\eqref{eq:fg} can be expressed in a matrix form as follows:
\begin{align}\label{eq-m}
\left(\begin{array}{c}
g^{\langle r, r-1 \rangle}(z) \\[3pt]
g^{\langle r, r-2 \rangle}(z) \\[3pt]
\vdots \\[3pt]
g^{\langle r, 0 \rangle}(z)
\end{array}\right)
=
\left(\begin{array}{cccc}
1+z & 1 & \cdots  &1 \\[3pt]
z & 1+z & \cdots  &1 \\[3pt]
\vdots & \vdots &   & \vdots \\[3pt]
z & z & \cdots & 1+z
\end{array}\right)
\left(\begin{array}{c}
f^{\langle r, r-1 \rangle}(z) \\[3pt]
f^{\langle r, r-2 \rangle}(z) \\[3pt]
\vdots \\[3pt]
f^{\langle r, 0 \rangle}(z)
\end{array}\right).
\end{align}
By Theorem~\ref{thm: interlacing recursion}, we get the desired result.
This completes the proof.
\end{proof}

Note that the transforming matrix in~\eqref{eq-m} also appears in~\cite[Lemma~4.4]{Solus2019Simplices}.
By iteratively using the above theorem, we obtain the following result. 

\begin{cor}\label{main2}
	Let $r$ be a positive integer.
	Suppose that
	\begin{align}\label{eq:h}
	\left(1 + z + z^2 + \cdots + z^{r}\right)^n = h_{n,0}(z^r) + z h_{n,1}(z^r) + \cdots + z^{r-1}h_{n,r-1}(z^{r}).
	\end{align}
	Then the polynomial sequence  $\left(h_{n,r-1}(z),\ldots, h_{n,1}(z), h_{n,0}(z)\right)$ is interlacing.
	In particular, $h \left(\Delta \left( \esd_r \left( 2^{[n]}\right) \right), z  \right)= h_{n,0}(z)$ has only real roots.
\end{cor}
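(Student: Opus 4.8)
The plan is to establish the interlacing claim by induction on $n$, using Lemma~\ref{lem-f} as the inductive step, and then to read off the real-rootedness of $h_{n,0}(z)$ from the interlacing of the sequence together with identity~\eqref{eq-h-esd}. First I would check the base case $n=0$: there $\left(1+z+\cdots+z^r\right)^0 = 1$, so $h_{0,0}(z) = 1$ and $h_{0,j}(z) = 0$ for $1 \le j \le r-1$; the sequence $(0,\ldots,0,1)$ is interlacing by the convention $0 \ll f$ and $f \ll 0$, and all entries have nonnegative coefficients. For the inductive step, suppose the sequence $\left(h_{n,r-1}(z),\ldots,h_{n,1}(z),h_{n,0}(z)\right)$ is interlacing with nonnegative coefficients. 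Set $f(z) := \left(1+z+\cdots+z^r\right)^n$ and $g(z) := \left(1+z+\cdots+z^r\right)^{n+1} = \left(1+z+\cdots+z^r\right)f(z)$; then $f^{\langle r,j\rangle}(z) = h_{n,j}(z)$ and $g^{\langle r,j\rangle}(z) = h_{n+1,j}(z)$ by the uniqueness of the decomposition in~\eqref{eq:h}. Since $f$ and $g$ have nonnegative coefficients and satisfy the hypothesis of Lemma~\ref{lem-f}, that lemma yields that $\left(h_{n+1,r-1}(z),\ldots,h_{n+1,1}(z),h_{n+1,0}(z)\right)$ is interlacing, completing the induction.

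For the final assertion, once we know $\left(h_{n,r-1}(z),\ldots,h_{n,1}(z),h_{n,0}(z)\right)$ is an interlacing sequence, each individual polynomial $h_{n,j}(z)$ — in particular $h_{n,0}(z)$ — is real-rooted by the definition of an interlacing sequence (its members are real-rooted polynomials with positive leading coefficients). Combining~\eqref{eq-h-esd} with the definition of the polynomials $h_{n,j}$ in~\eqref{eq:h} gives
\begin{align*}
h \left(\Delta \left( \esd_r \left( 2^{[n]}\right) \right), z  \right) = {\rm E}_r\left(\left(1+z+\cdots+z^r\right)^n\right) = h_{n,0}(z),
\end{align*}
since the operator ${\rm E}_r$ extracts exactly the coefficients of $z^{kr}$ and rescales $z^{kr}\mapsto z^k$, which by~\eqref{eq:h} picks out the $j=0$ component. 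Hence $h \left(\Delta \left( \esd_r \left( 2^{[n]}\right) \right), z  \right)$ has only real roots.

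The only delicate point is bookkeeping: one must verify that the decomposition~\eqref{eq:h} is compatible with multiplication by $1+z+\cdots+z^r$ in exactly the matrix form~\eqref{eq-m}, i.e.\ that multiplying $f(z)$ by $1+z+\cdots+z^r$ and then taking the $\langle r,\cdot\rangle$-components reproduces the stated $r\times r$ matrix acting on $\left(f^{\langle r,r-1\rangle},\ldots,f^{\langle r,0\rangle}\right)^T$. This is a routine but slightly fussy computation with residues modulo $r$ of the exponents: writing $1+z+\cdots+z^r = 1 + (z+\cdots+z^{r-1})$ and noting that $z+\cdots+z^{r-1}$ shifts each residue class $j$ to $j+1,\ldots,j+r-1 \pmod r$ while the leading $1$ and the $z^r$ term (coming from $z^{r-1}\cdot z$, already absorbed) keep or advance the residue, one recovers precisely the circulant-type matrix in~\eqref{eq-m}; this is carried out in the proof of Lemma~\ref{lem-f}, so no additional obstacle arises here. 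The substantive input — that this matrix preserves interlacing — is already supplied by Theorem~\ref{thm: interlacing recursion}.
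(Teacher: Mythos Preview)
Your proposal is correct and follows exactly the approach the paper intends: the paper's only comment is ``By iteratively using the above theorem, we obtain the following result,'' and your induction on $n$ with Lemma~\ref{lem-f} as the inductive step, followed by the identification $h_{n,0}(z)={\rm E}_r\left((1+z+\cdots+z^r)^n\right)$ via~\eqref{eq-h-esd}, is precisely that iteration spelled out in detail.
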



\section*{\bf Acknowledgments}
This work was done during Philip  Zhang's visit to the University of Pennsylvania. 
The authors  would like to thank  Petter~Br{\"a}nd{\'e}n, Zhicong~Lin, and Vasu~Tewari for their helpful discussions.
James Haglund is  supported by NSF Grant DMS-1600670.
Philip Zhang is  supported by the National Science Foundation of China (No. 11701424).


\end{document}